\numberwithin{equation}{section}
\newtheorem*{main}{Main Theorem}
\newtheorem*{maincor}{Corollary}
\newtheorem{thm}{Theorem}[section]
\newtheorem{lem}[thm]{Lemma}
\newtheorem{prop}[thm]{Proposition}
\theoremstyle{definition}
\newtheorem{example}[thm]{Examples}
\newtheorem{df}[thm]{Definition}
\newtheorem{claim}[thm]{Claim}
\newcommand{\R}{\mathbf{R}}
\newcommand{\C}{\mathbf{C}}
\newcommand{\N}{\mathbf{N}}
\newcommand{\B}{\mathbf{B}}
\newcommand{\id}{\text{\rm id}}
\newcommand{\rL}{\mathord{\text{\rm L}}}
\newcommand{\rB}{\mathord{\text{\rm B}}}
\newcommand{\ri}{\mathord{\text{\rm i}}}
\newcommand{\Tr}{\mathord{\text{\rm Tr}}}
\newcommand{\ovt}{\mathbin{\overline{\otimes}}}
\newcommand{\dpr}{^{\prime\prime}}
\newcommand{\cH}{\mathcal{H}}
\begin{document}

\title[Connes' bicentralizer problem for $q$-deformed Araki--Woods algebras]{Connes' bicentralizer problem for $\bf q$-deformed Araki--Woods algebras}

\begin{abstract}
	Let $(H_\R, U_t)$ be any strongly continuous orthogonal representation of $\R$ on a real (separable) Hilbert space $H_\R$. For any $q\in (-1,1)$, we denote by $\Gamma_q(H_\R,U_t)\dpr$ the $q$-deformed Araki--Woods algebra introduced by Shlyakhtenko and Hiai. In this paper, we prove that $\Gamma_q(H_\R,U_t)\dpr$ has trivial bicentralizer if it is a type $\rm III_1$ factor. In particular, we obtain that $\Gamma_q(H_\R,U_t)\dpr$ always admits a maximal abelian subalgebra that is the range of a faithful normal conditional expectation. Moreover, using \'Sniady's work, we derive that $\Gamma_q(H_\R,U_t)\dpr$ is a full factor provided that the weakly mixing part of $(H_\R, U_t)$ is nonzero.
\end{abstract}

\author{Cyril Houdayer}
\address{Universit\'e Paris-Saclay \\ CNRS \\ Laboratoire de math\'ematiques d'Orsay \\ 91405 \\ Orsay \\ Institut Universitaire de France \\ FRANCE }
\address{Graduate School of Mathematical Sciences \\ The University of Tokyo \\ Komaba \\ Tokyo \\ 153-8914 \\ JAPAN}
\email{cyril.houdayer@universite-paris-saclay.fr}
\thanks{CH is supported by ERC Starting Grant GAN 637601, Institut Universitaire de France, and FY2019 JSPS Invitational Fellowship for Research in Japan (long term)}

\author{Yusuke Isono}
\address{RIMS, Kyoto University, 606-8502 Kyoto, JAPAN}
\email{isono@kurims.kyoto-u.ac.jp}
\thanks{YI is supported by JSPS KAKENHI Grant Number JP17K14201}

\subjclass[2010]{46L10, 46L53, 46L54}
\keywords{Bicentralizer; Full factors; $q$-deformed Araki--Woods algebras; Ultraproduct von Neumann algebras}

\maketitle

\section{Introduction}\label{Introduction}

Let $M$ be any $\sigma$-finite von Neumann algebra equipped with a faithful state $\varphi \in M_*$. The {\em asymptotic centralizer} $\mathrm{AC}(M,\varphi)$  and the {\em bicentralizer} $\mathrm{B}(M,\varphi)$ are defined by
\begin{align*}
	\mathrm{AC}(M, \varphi) &:= \left\lbrace (x_n)_n \in \ell^\infty(\N, M) \mid \lim_{n \to \infty} \|x_n \varphi - \varphi x_n\|_{M_*} = 0 \right\rbrace;\\
	\mathrm{B}(M, \varphi) &:=  \left\lbrace a \in M \mid a x_n - x_n a\to 0\ (n\to \infty) \text{ $\ast$-strongly for all }(x_n)_n \in \mathrm{AC}(M, \varphi) \right\rbrace .
\end{align*}
Here we used the notation $(a\varphi b)(x):=\varphi(bxa)$ for all $a,b,x\in M$. A. Connes conjectured that any type III$_1$ factor $M$ with separable predual has \textit{trivial bicentralizer}, namely $\mathrm{B}(M, \varphi)=\C$. (This triviality does not depend on the choice of $\varphi$.) 
U. Haagerup established a fundamental characterization of this condition and proved that any injective type III$_1$ factor has trivial bicentralizer \cite{Ha85}. This was the last piece of the classification of amenable factors and he obtained the uniqueness of amenable type III$_1$ factors.

	Although Connes' original problem remains open, most of concrete examples are known to have trivial bicentralizer. 
Indeed the following type $\rm III_1$ factors with separable predual have trivial bicentralizer:
\begin{itemize}
	\item Factors with almost periodic states \cite{Co74} (see also Lemma \ref{almost periodic lemma});

	\item Free Araki--Woods factors \cite{Ho08}, more generally any free product factors \cite{HU15};

	\item Semisolid factors \cite{HI15};

	\item Tensor product factors $P\ovt Q$, where $P$ is a type III$_1$ factor with trivial bicentralizer and $Q$ is any factor \cite{Ma18} (see also \cite[Lemma 7.5]{IM19}).

\end{itemize}
By \cite{Ha85, Po81}, the triviality of the bicentralizer algebra is equivalent to the existence of a maximal abelian subalgebra that is the range of a faithful normal conditional expectation (say \textit{with expectation} in short). Therefore all the aforementioned factors have maximal abelian subalgebras with expectation. 

	In this paper, we investigate the bicentralizer problem for the $q$-deformed Araki--Woods algebras introduced by Hiai \cite{Hi02}, where $q\in (-1,1)$. This is a $q$-analogue of Shlyakhtenko's free Araki--Woods factors \cite{Sh97} and hence it has a free product structure when $q=0$. When $q\neq 0$, however, it no longer has the free product structure and this is the reason why the bicentralizer problem is not solved for these algebras. 

	Recently it was observed in \cite[Proposition 3.3]{HI15} that $\mathrm B(M,\varphi)$ can be realized as a subalgebra in the ultraproduct von Neumann algebra: for any free ultrafilter $\omega $ on $\N$,
	$$ \mathrm B(M,\varphi) = (M^\omega_{\varphi^\omega})' \cap M \subset M^\omega.$$
See also \cite{AHHM18,Ma18} for recent developments in the bicentralizer problem, using ultraproduct techniques. 
We will use the ultraproduct framework to study $q$-Araki--Woods algebras. Our key observation is that for the $q$-Araki--Woods algebra $M_q = \Gamma_q(H_\R,U_t)\dpr$, the $q$-Araki--Woods algebra $\mathcal M_q = \Gamma_q((H_\R)_{U, \omega}, U_\omega)\dpr$ associated with the ultraproduct orthogonal representation sits as an intermediate subalgebra with expectation $M_q \subset \mathcal M_q \subset M_q^\omega$ (see Section \ref{section:equicontinuity} for further details). 

Our main theorem solves the bicentralizer problem for $q$-Araki--Woods algebras. Moreover, using \'Sniady's work \cite{Sn03}, we derive that $\Gamma_q(H_\R,U_t)\dpr$ is a full factor provided that the weakly mixing part of $(H_\R, U_t)$ is nonzero.

\begin{main}\label{thmA}
	Let $q\in (-1,1)$ and let $(H_\R, U_t)$ be a strongly continuous orthogonal representation such that the weakly mixing part is nonzero. Then $\Gamma_q(H_\R,U_t)\dpr$ is a full factor of type $\rm III_1$ which has  trivial  bicentralizer.
\end{main}

\begin{maincor}\label{corB}
	Any $q$-Araki--Woods algebra $\Gamma_q(H_\R,U_t)\dpr$ admits a maximal abelian subalgebra with expectation, provided that $H_\R$ is separable.
\end{maincor}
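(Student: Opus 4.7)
The plan is to show that every $q$-Araki--Woods algebra $M := \Gamma_q(H_\R, U_t)\dpr$ with separable $H_\R$ has trivial bicentralizer, and then to invoke the Haagerup--Popa equivalence \cite{Ha85, Po81} between triviality of the bicentralizer and existence of a MASA with expectation recalled in the introduction. Since this triviality is independent of the choice of faithful normal state, I am free to pick a convenient state in each case.

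If $M$ is not of type III$_1$, then it is either semifinite or of type III$_\lambda$ for some $\lambda \in (0, 1)$; type III$_0$ is excluded because the quasi-free state $\varphi_U$ is almost periodic whenever $U_t$ is almost periodic, and on a type III$_0$ factor any almost periodic state would force $\Sd(M) = \Sp(\Delta_{\varphi_U}) = \{1\}$, making $M$ semifinite. In the semifinite case, the bicentralizer computed against the trace is contained in $Z(M) = \C$. In the type III$_\lambda$ case with $\lambda \in (0, 1)$, picking a periodic state $\psi$ whose centralizer $M_\psi$ is a factor yields $\mathrm B(M, \psi) \subset M_\psi' \cap M = Z(M_\psi) = \C$.

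If instead $M$ is of type III$_1$, then either the weakly mixing part of $U_t$ is nonzero and the Main Theorem applies directly, or $U_t$ is almost periodic and $\varphi_U$ is an almost periodic state (its modular automorphism group is the second quantization of $U_t$); in the latter situation Lemma \ref{almost periodic lemma}, a version of Connes' classical result \cite{Co74}, gives trivial bicentralizer. Applying the Haagerup--Popa equivalence in each case produces the desired MASA with expectation.

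The real content is the weakly mixing subcase of the III$_1$ regime, which is the Main Theorem itself and is proved in the body of the paper via the ultraproduct construction $M_q \subset \mathcal M_q \subset M_q^\omega$ described in Section~\ref{section:equicontinuity}. Everything else in the deduction of Corollary B is a short case analysis combining standard features of factors with tracial or almost periodic states with the Haagerup--Popa equivalence.
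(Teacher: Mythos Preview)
Your argument reaches the right conclusion but is more circuitous than the paper's. The paper splits into only two cases, according to whether $H_\R^{\rm wm}$ vanishes. If $H_\R^{\rm wm}=0$, the vacuum state $\varphi_q$ is almost periodic and Lemma~\ref{almost periodic lemma}(2) produces a MASA with expectation \emph{directly}, without ever mentioning the bicentralizer. If $H_\R^{\rm wm}\neq 0$, the Main Theorem gives trivial bicentralizer for the type~$\mathrm{III}_1$ factor $M_q$, and then \cite{Ha85,Po81} yields the MASA.

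Your detour through a type classification of $M$ is unnecessary and introduces a wrinkle: the Haagerup--Popa equivalence you invoke at the end is, as proved in \cite{Ha85}, a theorem about type~$\mathrm{III}_1$ factors. In your semifinite and $\mathrm{III}_\lambda$ cases you do verify $\mathrm B(M,\psi)=\C$, but \cite{Ha85} does not convert that into a MASA with expectation outside type~$\mathrm{III}_1$. The conclusion is of course still true---in every non-$\mathrm{III}_1$ case one has $H_\R^{\rm wm}=0$ (by \cite{BM16}), hence $\varphi_q$ is almost periodic, and Lemma~\ref{almost periodic lemma}(2) (or \cite{Po81} alone in the $\mathrm{II}_1$ case) supplies the MASA directly---so the repair is simply to bypass the bicentralizer in those cases, which is exactly what the paper does.
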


For any nonzero $\xi \in H_\R$, the abelian subalgebra $\{W_q(\xi)\}\dpr$ is called a \textit{generator subalgebra}. If $\xi$ is fixed by the action, then the generator subalgebra is a maximal abelian subalgebra (masa) with expectation \cite{Ri04,BM16}. However, if $\xi$ is not fixed by the action, then this subalgebra is not with expectation (and is not a masa if $\Gamma_q(H_\R,U_t)\dpr$ is diffuse \cite{BM19}). Hence if the action has no fixed points, probably the masa given in Corollary cannot be written in a concrete way.

\subsection*{Acknowledgments}
YI would like to thank Yuki Arano for useful comments and Mateusz Wasilewski for pointing out a gap of the proof in the first draft of the paper.

{
  \hypersetup{linkcolor=black}
  \tableofcontents
}

\section{Preliminaries}\label{Preliminaries}

Throughout the paper, all von Neumann algebras are assumed to be $\sigma$-finite.

\subsection*{Ultraproduct von Neumann algebras}
	Let $M$ be any $\sigma$-finite von Neumann algebra and we denote by $\ell^\infty(\N, M)$ the set of all norm bounded sequences. 
For any free ultrafilter $\omega$ on $\N$, define
\begin{align*}
	\mathcal I_{\omega} &= \left\{ (x_n)_{n} \in \ell^\infty(\N, M) \mid x_n \to 0 \text{ $\ast$-strongly as } n \to \omega \right\} ;\\
	\mathcal M^{\omega} &= \left \{ x \in \ell^\infty(\N, M) \mid  x \mathcal I_{\omega} \subset \mathcal I_{\omega} \text{ and } \mathcal I_{\omega}x \subset \mathcal I_{\omega}\right\}.
\end{align*}
The quotient C$^*$-algebra $M^\omega := \mathcal{M}^\omega/ \mathcal{I}_\omega$ is a von Neumann algebra, and we call it the \textit{ultraproduct von Neumann algebra} \cite{Oc85}. For any faithful state $\varphi\in M_*$, the assignment $\mathcal M^\omega \ni (x_n)_n \mapsto \lim_{n\to \omega} \varphi(x_n)$ induces a faithful normal state on $M^\omega$, which we write as $\varphi^\omega$. 
There is a natural embedding $M \subset M^\omega$ with expectation $E_M$ such that the embedding map is normal and $\varphi^\omega = \varphi \circ E_M$. For any $(x_n)_n\in \mathcal M^\omega$, we denote by $(x_n)_\omega$ its image in $M^\omega$. For more on ultraproduct von Neumann algebras, we refer the reader to \cite{Oc85,AH12}.

\subsection*{Almost periodic states}

	For any von Neumann algebra $M$ with a faithful state $\varphi\in M_*$, we will denote by $\Delta_\varphi$ the \textit{modular operator}, by $J_\varphi$  the \textit{modular conjugation}, and by $\sigma^\varphi_t$ $(t\in \R)$ the \textit{modular action}. The fixed point of $\sigma^\varphi$ is denoted by $M_\varphi \subset M$ and we call it the \textit{centralizer algebra}. We say that $\varphi$ is \textit{almost periodic} \cite{Co74} if the modular operator $\Delta_\varphi$ is diagonalizable.

The following lemma is well known. We include a short proof for the reader's convenience.

\begin{lem}\label{almost periodic lemma}
	Let $M$ be a von Neumann algebra equipped with an almost periodic state $\varphi\in M_*$. The following assertions hold true.
\begin{enumerate}
	\item[$\rm(1)$] If $M$ is diffuse, then $M_\varphi$ is also diffuse.
	\item[$\rm(2)$] There is a semifinite von Neumann subalgebra $N \subset M$ with expectation such that $N' \cap M \subset N$. In particular $M$ has a maximal abelian subalgebra with expectation if $M_*$ is separable.
\end{enumerate}
\end{lem}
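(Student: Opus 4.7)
The plan is to use the spectral decomposition of $M$ under the almost periodic modular action $\sigma^\varphi$: let $M_\lambda := \{x \in M : \sigma_t^\varphi(x) = \lambda^{it} x\}$ for each eigenvalue $\lambda > 0$ of $\Delta_\varphi$, so that $M_1 = M_\varphi$ and $\bigoplus_\lambda M_\lambda$ is $\sigma$-weakly dense in $M$. The key tool throughout is the KMS identity
\[
	\varphi(xx^*) \;=\; \lambda\,\varphi(x^*x) \qquad (x \in M_\lambda),
\]
which is a direct consequence of $\sigma_{-\ri}^\varphi(x^*) = \lambda^{-1} x^*$.

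For (1), I argue by contradiction. Assume $M$ is diffuse and $p \in M_\varphi$ is a minimal projection; then $\varphi_p := \varphi(p)^{-1}\varphi(p\,\cdot\, p)$ is almost periodic on $pMp$ with centralizer $pM_\varphi p = \C p$. The goal reduces to the sub-claim: \emph{if $\varphi$ is almost periodic and $M_\varphi = \C$, then $M = \C$}. Indeed, for any nonzero $x \in M_\lambda$ the positive elements $x^*x, xx^* \in M_\varphi = \C$ equal $\|x\|^2$ and $\|x^*\|^2 = \|x\|^2$ respectively, so $x$ is a scalar multiple of a unitary; the KMS identity then forces $\lambda = 1$. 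Hence $M_\lambda = 0$ for $\lambda \neq 1$ and $M = M_\varphi = \C$, contradicting the diffuseness of $pMp$.

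For (2), I take $N := M_\varphi$, which is semifinite (via $\varphi|_{M_\varphi}$) and is the range of a $\varphi$-preserving conditional expectation $E : M \to M_\varphi$ obtained by averaging $\sigma^\varphi$ over the Bohr compactification (well-defined because $\varphi$ is almost periodic). The essential step is to show $M_\varphi' \cap M \subset M_\varphi$, equivalently $M_\varphi' \cap M_\lambda = 0$ for every $\lambda \neq 1$. Given $0 \neq x \in M_\varphi' \cap M_\lambda$, the elements $x^*x, xx^* \in M_\varphi$ also commute with $M_\varphi$, hence lie in $Z(M_\varphi)$; the polar decomposition $x = v|x|$ yields $v \in M_\lambda$ by uniqueness and $\sigma^\varphi$-covariance, and a short spectral argument on the support of $|x|$---using that $|x|\in Z(M_\varphi)$---upgrades $v$ to an element of $M_\varphi' \cap M_\lambda$. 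Setting $p := v^*v$ and $q := vv^* \in Z(M_\varphi)$, the identities $vyv^* = yq$ and $v^*yv = yp$ (valid for $y \in M_\varphi$ since $v \in M_\varphi'$) applied to $y=p$ and $y=q$ yield $q \leq p$ and $p \leq q$, so $p = q$; the KMS identity then gives $\varphi(p) = \lambda^{-1}\varphi(p)$, forcing $\lambda = 1$---a contradiction. For the \emph{in particular} clause, assuming $M_*$ separable, I pick a masa $A \subset M_\varphi$ with expectation (standard for a $\sigma$-finite semifinite algebra with separable predual); then $A' \cap M \subset M_\varphi' \cap M \subset M_\varphi$, so $A' \cap M \subset A' \cap M_\varphi = A$, showing that $A$ is a masa of $M$ with expectation given by the composition $M \to M_\varphi \to A$.

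The main obstacle, I expect, will be transferring commutation with $M_\varphi$ through the polar decomposition in (2)---showing $v \in M_\varphi'$ when $|x|$ is only central in $M_\varphi$ rather than in $M$. Once that is in hand, the comparison $p = q$ via $vyv^* = yq$, $v^*yv = yp$ and the KMS closing step are immediate.
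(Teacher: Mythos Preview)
Your proof of (1) is correct and coincides with the paper's, additionally spelling out the detail (which the paper leaves implicit) that an almost periodic state with trivial centralizer forces $M = \C$.

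For the main assertion of (2), your approach is correct but genuinely different from the paper's. The paper constructs $N$ as the discrete core: with $\Lambda \leq \R_+^*$ the group generated by the eigenvalues of $\Delta_\varphi$ and $G = \widehat\Lambda$ its compact dual, the modular action extends to a $G$-action, one sets $N = M \rtimes_{\sigma^{\varphi,\Lambda}} G$, and Takesaki duality together with trace-scaling of the dual action yield $N' \cap M \subset N$ inside $M \simeq N \rtimes_{\widehat\sigma} \Lambda$. Your choice $N = M_\varphi$ with a direct KMS/polar-decomposition argument on eigenspaces is more elementary and avoids crossed products entirely; the paper's route produces a larger (type $\mathrm{II}_\infty$) subalgebra, yours a finite one. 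The polar-decomposition step you flag as the obstacle does go through as you sketch: the support $e$ of $|x|$ lies in $Z(M_\varphi)$, so $ey = ye$ for $y \in M_\varphi$, and since $v = ve$ the identity $(yv - vy)e = 0$ upgrades to $yv = vye = vey = vy$.

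There is, however, a genuine gap in your ``in particular'' clause. The inclusion $A' \cap M \subset M_\varphi' \cap M$ is backwards: from $A \subset M_\varphi$ one gets $M_\varphi' \subset A'$, hence $M_\varphi' \cap M \subset A' \cap M$, not the reverse. Knowing only $M_\varphi' \cap M \subset M_\varphi$ does not force an arbitrary masa $A$ of $M_\varphi$ to be maximal abelian in $M$. The paper handles this step by invoking \cite{Po81}. You can also repair it directly: simply rerun your KMS argument with $A$ in place of $M_\varphi$. For $0 \neq x \in A' \cap M_\lambda$ one has $x^*x, xx^* \in A' \cap M_\varphi = A$; since $A$ is abelian, $|x|$ and its support $e$ lie in $A$ and commute with every $y \in A$, so the same computation gives $v \in A'$, then $p = q \in A$, and the KMS identity forces $\lambda = 1$. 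Thus $A' \cap M_\lambda = 0$ for $\lambda \neq 1$, whence $A' \cap M \subset M_\varphi$ and $A' \cap M = A' \cap M_\varphi = A$.
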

\begin{proof}
	(1) If there is a nonzero projection $p\in M_\varphi$ which is minimal in $M_\varphi$, then $pMp$  is a von Neumann algebra with an almost periodic state $\varphi(p)^{-1}\varphi(p \cdot p)$ such that the centralizer is trivial. This forces $pMp$ to be trivial and $p$ is minimal in $M$.

	(2) We may assume that $M$ is of type III. Let $\Lambda \leq \R^*_+$ be the subgroup generated by all eigenvalues of $\Delta_\varphi$. By regarding $\Lambda$ as a discrete group, consider the dual compact group  $G:=\widehat{\Lambda}$, so that there is a natural continuous homomorphism $\widehat{\beta}\colon \R \to G$ given by $\widehat{\beta}(t)(\lambda):= \lambda^{{\rm i} t}$ for $t\in \R$, $\lambda \in \Lambda$. 
By \cite[Proposition 1.1]{Co74}, there is a $G$-action $\sigma^{\varphi,\Lambda}
$ on $M$ such that $\sigma^{\varphi,\Lambda}_{\widehat{\beta}(t)} = \sigma^\varphi_t$ for all $t\in \R$. Define 
	$$D_{\varphi,\Lambda}(M):= M \rtimes_{\sigma^{\varphi,\Lambda}} G$$
with the dual weight $\widehat{\varphi}$. Let $h$ be the infinitesimal generator of the unitary representation $\R \ni t \mapsto \lambda^G_{\widehat{\beta}(t)}\in LG$ and put $\Tr:=\widehat{\varphi}(h^{-1} \, \cdot \, )$. Since $\sigma^{\Tr}_t = \id$ for all $t\in \R$, it is indeed a semifinite trace. Observe that the dual action $\widehat{\sigma}$ of $\sigma^{\varphi,\Lambda}$ scales $\Tr$ and therefore it is properly outer, which means that the commutant of $D_{\varphi,\Lambda}(M)$ in $D_{\varphi,\Lambda}(M)\rtimes_{\widehat{\sigma}} \Lambda$ is contained in $D_{\varphi,\Lambda}(M)$. Finally by the Takesaki duality, we have
	$$ D_{\varphi,\Lambda}(M)\rtimes_{\widehat{\sigma}} \Lambda  \simeq M \ovt \B(\ell^2(\Lambda)) \simeq M.$$
Thus $M$ admits a desired semifinite von Neumann subalgebra $D_{\varphi,\Lambda}(M)$ with expectation. The last statement follows by \cite[Theorem 3.3]{Po81}.

Here is another short proof of item (2), by assuming that $M$ is a type III factor with separable predual. By (the proof of) \cite[Theorem 3.5]{HI15}, the centralizer of $\rB(M, \varphi)$ is trivial, while $\varphi$ is almost periodic on $\rB(M, \varphi)$. By item (1), this implies $\rB(M, \varphi) = \C$.
\end{proof}

\subsection*{$q$-deformed Araki--Woods algebras}

	Let $q\in [-1,1]$ and let $H$ be a Hilbert space with a (second linear) inner product $\langle \cdot,\cdot \rangle_H$. We first recall the $q$-deformed Fock space \cite{BS90}.  We define inner products on all algebraic tensor products $H^{\otimes_{\rm alg} n}$ for $n\in \N$ by
	$$\langle \xi_1\otimes\cdots \otimes \xi_n,\eta_1\otimes\cdots \otimes \eta_n\rangle_q = \sum_{\pi\in \mathfrak{S}_n} q^{i(\pi)}\langle \xi_1,\eta_{\pi(1)}\rangle_H \cdots \langle \xi_n,\eta_{\pi(n)}\rangle_H,$$
where $\mathfrak{S}_n$ is the symmetric group and $i(\pi)$ is the number of inversions. They are non-degenerate for all $n\in \N$ and we write as $H^{\otimes n}$ the completion by this inner product. We have the \textit{$q$-deformed Fock space} $\mathcal F_q(H)$ as an infinite direct sum of Hilbert spaces (with the natural inner product):
	$$ \C \Omega \oplus \bigoplus_{n\geq 1} H^{\otimes n} . $$
For each $\xi \in H$, the left and right creation operators $\ell_q(\xi)$ and $r_q(\xi)$ on $\mathcal F_q(H)$ are defined in a natural way. It is known that for all $\xi \in H$,
	\begin{align*}
\|\ell_q(\xi)\|_\infty=\|r_q(\xi)\|_\infty = \frac{1}{\sqrt{1-q}} \|\xi\|_H & \quad (0\leq q<1) \\
\|\ell_q(\xi)\|_\infty=\|r_q(\xi)\|_\infty = \|\xi\|_H \; \quad  \quad \quad & (-1\leq q\leq 0)
\end{align*}
and hence $\ell_q(\xi),r_q(\xi)\in \B(\mathcal F_q(H))$. They are unbounded if $q=1$.

	Assume $q\in (-1,1)$. We next recall the $q$-deformed Araki--Woods algebra \cite{Hi02}. Let $H_\R$ be a real Hilbert space and $(U_t)_t$ a strongly continuous orthogonal representation of $\R$ on $H_\R$. Consider $H:=H_\R \otimes_\R \C$ with the natural $\C$-valued inner product and with the natural involution $I$. We extend $U_t$ by $U_t \otimes \id_\C$ and have a unitary representation of $\R$ on $H$. Let $A$ be the infinitesimal generator of $U_t$, that is, $U_t = A^{ {\rm i} t}$ for all $t\in \R$. We consider the following new embedding 
	$$j \colon H \to H; \ j(\xi)=\frac{\sqrt{2}}{\sqrt{1+A^{-1}}}\, \xi. $$
Put $K_\R := j(H_\R)$ and consider $\mathcal F_q(H)$. For $\xi\in K_\R + {\rm i} K_\R$, we define 
	$$W_q(\xi):=\ell_q(\xi) + \ell_q(T\xi)^*, \quad W_q^r(I\xi):=r_q(I\xi) + r_q(IT{\xi})^* ,$$
 where $T:=IA^{-1/2}$. The \textit{$q$-deformed Araki--Woods algebra} is defined by
	$$ \Gamma_q(H_\R,U_t)\dpr:= \mathrm{W}^* \{ W_q(\xi) \mid \xi \in K_\R \} \subset \B(\mathcal F_q(H)). $$
When $q=0$, this coincides with Shlyakhtenko's free Araki--Woods factors \cite{Sh97}. For simplicity we write as $M_q:=\Gamma_q(H_\R,U_t)\dpr$ and $\varphi_q:= \langle \Omega , \, \cdot \, \Omega \rangle_q$. It is known that $\Omega$ is a cyclic and separating vector for $M_q$ and the commutant is given by $ M_q'= \mathrm{W}^* \{ W_q^r(I\xi) \mid \xi \in K_\R \}$. For any $\xi_1, \ldots, \xi_n\in K_\R + {\rm i} K_\R$, there is a unique element $W_q(\xi_1\otimes \cdots \otimes \xi_n)\in M_q$ such that 
	$$ W_q(\xi_1\otimes \cdots \otimes \xi_n)\Omega =\xi_1\otimes \cdots \otimes \xi_n .$$
Here $W_q^r(I\xi_1\otimes \cdots \otimes I\xi_n)$ is defined similarly.  The modular theory for $(M_q,\varphi_q)$ is given as follows: for all $t\in \R$ and $\xi_1, \ldots, \xi_n\in K_\R$,
\begin{align*}
	& \sigma_t^{\varphi_q}(W_q(\xi_1\otimes \cdots \otimes \xi_n)) = W_q((U_{-t}\xi_1)\otimes \cdots \otimes (U_{-t}\xi_n));\\
	& \Delta_{\varphi_q}(\xi_1\otimes \cdots \otimes \xi_n) = (A^{-1}\xi_1)\otimes \cdots \otimes (A^{-1}\xi_n);\\
	& J_{\varphi_q}(\xi_1\otimes \cdots \otimes \xi_n) = (I\xi_n)\otimes \cdots \otimes (I\xi_1) = (A^{-\frac{1}{2}}\xi_n)\otimes \cdots \otimes (A^{-\frac{1}{2}}\xi_1).
\end{align*}
Recall that any $(H_\R,U_t)$ has a unique decomposition $H_\R = H_\R^{\rm ap} \oplus H_\R^{\rm wm}$, where $(U_t)_t$ acts on $H_\R^{\rm ap}$ as an almost periodic action and on $H_\R^{\rm wm}$ as a weakly mixing action. In this case, the state $\varphi_q$ is almost periodic if and only if  $H_\R^{\rm wm} = 0$.

\section{$\omega$-equicontinuity and ultraproducts}\label{section:equicontinuity}

In this section, we introduce a von Neumann algebra which is contained in the ultraproduct of a given $q$-Araki--Woods algebras. This will be very useful in our work.

Let $\omega$ be any free ultrafilter on $\N$. Let $\cH$ be a real or a complex Hilbert space and $U : \R \curvearrowright \cH$ any strongly continuous orthogonal or unitary representation.

\begin{df}
We say that a bounded sequence $(\xi_n)_n \in \ell^\infty(\N, \cH)$ is $(U, \omega)$-{\em equicontinuous} if for any $\varepsilon > 0$, there exists $ \delta > 0$ such that 
	$$\left \{ n \in \N \mid \sup_{|t| \leq \delta} \|U_t \xi_n - \xi_n\| < \varepsilon \right \} \in \omega.$$
We denote by $\mathfrak E(\cH, U, \omega)$ the subspace of $\ell^\infty(\N, \cH)$ consisting in all $(U, \omega)$-equicontinuous bounded sequences.
\end{df}

\begin{example}
Here are basic examples of $(U, \omega)$-equicontinuous bounded sequences.
\begin{enumerate}
\item Let $(\xi_n)_n \in \ell^\infty(\N, \cH)$ be any $U$-{\em almost invariant} bounded sequence. By assumption, for every $\kappa > 0$, we have $\lim_n (\sup_{|t|  \leq \kappa} \|U_t \xi_n - \xi_n\|) = 0$. In particular, for any $\varepsilon > 0$ and any $\delta > 0$, we have $\{ n \in \N \mid \sup_{|t| \leq \delta} \|U_t \xi_n - \xi_n\| < \varepsilon \} \in \omega$. Thus, $(\xi_n)_n$ is $(U, \omega)$-equicontinuous.

\item Let $\xi \in \cH$ be any vector and $(t_n)_{n}$ any sequence of reals. For every $n \in \N$, put $\xi_n = U_{t_n} \xi$. For every $\varepsilon > 0$, there exists $\delta > 0$ such that $\sup_{|t| \leq \delta} \|U_t \xi - \xi\| < \varepsilon$. Then for every $n \in \N$, we have 
$$\sup_{|t| \leq \delta} \|U_t \xi_n - \xi_n\| = \sup_{|t| \leq \delta} \|U_{t_n} (U_t \xi - \xi)\| = \sup_{|t| \leq \delta} \|U_t \xi - \xi\| < \varepsilon.$$
Thus, $(U_{t_n}\xi)_n$ is $(U, \omega)$-equicontinuous.
\end{enumerate}
\end{example}

Put $\mathfrak I_\omega(\cH) = \{(\xi_n)_n \in \ell^\infty(\N, \cH) \mid \lim_{n \to \omega} \|\xi_n\| = 0\}$. Observe that 
	$$\mathfrak I_\omega(\cH) \subset \mathfrak E(\cH, U, \omega) \subset \ell^\infty(\N, \cH).$$ 
Denote by $\cH_\omega = \ell^\infty(\N, \cH) / \mathfrak I_\omega(\cH)$ the ultraproduct Hilbert space. For every $(\xi_n)_n\in \ell^\infty(\N, \cH)$ and $t \in \R$, denote by $(\xi_n)_\omega$ the image in $\cH_\omega$, and by $(U_\omega)_t $ the orthogonal or unitary transformation on $\cH_\omega$ defined by $(U_\omega)_t (\xi_n)_\omega = (U_t \xi_n)_\omega$. Observe however that the map $\R\ni t \mapsto (U_\omega)_t $ need not be strongly continuous. 
Denote by $\cH_{U, \omega} \subset \cH_\omega$ the Hilbert subspace defined by $\cH_{U, \omega} = \mathfrak E(\cH, U, \omega)/\mathfrak I_\omega(\cH)$. Then the restriction of $(U_\omega)_t$ on $\cH_{U,\omega}$ is strongly continuous. Observe that $\R \curvearrowright \cH$ is a subrepresentation of $\R \curvearrowright \cH_{U, \omega}$.

Now let $(H_\R, U_t)$ be any strongly continuous orthogonal representation. We keep the notation from Section \ref{Preliminaries}, such as $j,K_\R,A,\varphi_q$, and $M_q:=\Gamma_q(H_\R,U_t)\dpr$. Observe that there is a canonical identification $(H_\R)_{U,\omega}\otimes_\R \C = H_{U,\omega}$. 
Let $A_{\omega}$ be the infinitesimal generator of $U_\omega$ on $H_{U,\omega}$. For any $a>0$, define $\mathcal{D}(a)\subset H_{U,\omega}$ as the subspace of all vectors $\xi$ such that $\xi$ has a representative $(\xi_n)_n$ with $\xi_n\in E_{\log(A)}([-a,a])H$, where $E_{\log(A)}([-a,a])$ is the spectral projection of $\log(A)$ for $[-a,a]$. By \cite[Theorem 4.1]{AOS13} (see also \cite{KZ85,AH12}), it holds that
\begin{itemize}
	\item $\mathcal{D}_0:=\bigcup_{a\in\N} \mathcal{D}(a)$ is a core of $\log (A_\omega)$;

	\item $\log (A_\omega) (\xi_n)_\omega = (\log(A)\xi_n)_\omega$ for all $(\xi_n)_\omega\in \mathcal{D}_0$.

\end{itemize}
These imply that $f(\log (A_\omega)) (\xi_n)_\omega = (f(\log(A))\xi_n)_\omega$ for all $(\xi_n)_\omega\in \mathcal{D}_0$ and all polynomials $f$. Since $\log (A_\omega)$ is bounded on $\mathcal{D}(a)$ for each $a$, this equation holds for all $f\in C(\R)$. In particular, the cases $f(\log(A_\omega))=A_\omega$ and $f(\log(A_\omega))=[2/(1+A_\omega^{-1})]^{-1/2}$ hold on $\mathcal D_0$. The second case means that
	$$ j_\omega((\xi_n)_\omega) = \frac{\sqrt{2}}{\sqrt{1+A_\omega^{-1}}} (\xi_n)_\omega =  \left(\frac{\sqrt{2}}{\sqrt{1+A^{-1}}}\xi_n\right)_\omega = (j(\xi_n))_\omega,\quad \text{for all }(\xi_n)_\omega\in \mathcal{D}_0,$$
where $j_\omega$ is the $j$-map for $A_\omega$. Since $j_\omega$ and $j$ are bounded, this holds for all $(\xi_n)_\omega\in H_{U,\omega}$.

\begin{prop}
	Let $(\xi_n)_n \in \ell^\infty(\N, H)$ be any bounded sequence such that $\xi_n\in K_\R$ for all $n$. The following conditions are equivalent:
\begin{enumerate}
\item [$(\rm 1)$]  $(\xi_n)_n$ is $(U, \omega)$-equicontinuous.

\item [$(\rm 2)$] $(W_q(\xi_n))_n$ is $(\sigma^{\varphi_q}, \omega)$-equicontinuous in the sense of \cite[Subsection 1.4]{MT13}.

\item [$(\rm 3)$] $(W_q(\xi_n))_n \in \mathfrak M^\omega(M_q)$.
\end{enumerate}
If $(\eta_n)_n \in \ell^\infty(\N, H_\R)$ is any $(U, \omega)$-equicontinuous sequence, then $(j(\eta_n))_n$ is $(U, \omega)$-equicontinuous, hence it satisfies all above conditions.
\end{prop}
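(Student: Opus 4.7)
The plan is to leverage the modular formula $\sigma_t^{\varphi_q}(W_q(\xi)) = W_q(U_{-t}\xi)$ recalled in Section \ref{Preliminaries}, together with the Wick-type isometry $W_q(\eta)\Omega = \eta$, to transport equicontinuity between the Hilbert-space level and the modular-flow level. As a preliminary observation I would first check that $T\xi = \xi$ for every $\xi \in K_\R$: writing $\xi = j(\eta)$ with $\eta \in H_\R$ and using the relation $IA^{-1/2} = A^{1/2}I$ (since $U_t$ is a real representation, $I$ and $A$ satisfy $IAI^{-1} = A^{-1}$) together with $I\eta = \eta$, a direct computation gives $Tj(\eta) = \sqrt{2}\,A^{1/2}(1+A)^{-1/2}\eta = \sqrt{2}(1+A^{-1})^{-1/2}\eta = j(\eta)$. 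Hence $W_q(\xi) = \ell_q(\xi) + \ell_q(\xi)^*$ is self-adjoint for every $\xi \in K_\R$, and, since $K_\R$ is $U_t$-invariant, the same holds for $W_q(U_{-t}\xi_n - \xi_n)$.

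For the equivalence $(1)\Leftrightarrow(2)$, the modular formula gives
$$\sigma^{\varphi_q}_t(W_q(\xi_n)) - W_q(\xi_n) = W_q(U_{-t}\xi_n - \xi_n),$$
and applying both sides to $\Omega$ yields the norm identity
$$\bigl\|\sigma^{\varphi_q}_t(W_q(\xi_n)) - W_q(\xi_n)\bigr\|_{\varphi_q} = \|U_{-t}\xi_n - \xi_n\|_H.$$
By the self-adjointness noted above, the $\sharp$-norm differs only by a factor $\sqrt 2$, so the supremum over $|t|\le \delta$ of the two quantities coincide up to an absolute constant. The equivalence $(1)\Leftrightarrow(2)$ follows directly from the definitions of $(U,\omega)$- and $(\sigma^{\varphi_q},\omega)$-equicontinuity.

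The equivalence $(2)\Leftrightarrow(3)$ is a general characterization that I would quote rather than reprove: a norm-bounded sequence in $M$ defines an element of $\mathcal M^\omega(M)$ if and only if it is $(\sigma^\varphi,\omega)$-equicontinuous, which is the content of \cite{MT13}*{Subsection 1.4} (and appears in essentially the same form in \cite{AH12}). Applied to $M_q$ with $\varphi = \varphi_q$, this gives the desired equivalence.

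For the last assertion, the map $j = \sqrt{2}(1+A^{-1})^{-1/2}$ is a bounded Borel function of $A$ with $\|j\|\le \sqrt{2}$, hence commutes with $U_t = A^{\ri t}$. Therefore $U_t j(\eta_n) - j(\eta_n) = j(U_t\eta_n - \eta_n)$ and
$$\|U_t j(\eta_n) - j(\eta_n)\| \le \sqrt{2}\,\|U_t\eta_n - \eta_n\|,$$
so the $(U,\omega)$-equicontinuity of $(\eta_n)_n$ immediately transfers to $(j(\eta_n))_n$, and condition $(1)$ (hence all three) applies. The only genuinely delicate point in the whole statement is the computation $T\xi = \xi$ on $K_\R$, which makes the inequality $(1)\Leftrightarrow(2)$ an equality rather than a one-sided comparison; the rest is a combination of the explicit modular formula and the black-boxed characterization of $\mathcal M^\omega$.
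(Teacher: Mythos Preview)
Your proof is correct and follows essentially the same approach as the paper's own argument: both rely on the self-adjointness of $W_q(\xi)$ for $\xi\in K_\R$, the identity $\|\sigma_t^{\varphi_q}(W_q(\xi_n))-W_q(\xi_n)\|_{\varphi_q}=\|U_{-t}\xi_n-\xi_n\|_H$, and the characterization of $\mathcal M^\omega$ via $(\sigma^\varphi,\omega)$-equicontinuity from \cite{MT13}. Your version is simply more explicit, spelling out the computation $T\xi=\xi$ on $K_\R$ and the reason why $j$ preserves equicontinuity, where the paper just notes self-adjointness and calls the last assertion ``trivial.''
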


\begin{proof}
Observe that $W_q(\eta) = W_q(\eta)^*$ is selfadjoint for every $\eta \in K_\R$. The equivalence $(\rm 1) \Leftrightarrow (\rm 2)$ follows from the facts that for every $t \in \R$ and every $n \in \N$, we have 
	$$\|U_t \xi_n - \xi_n\|_{H} = \|W_q(U_t \xi_n) - W_q(\xi_n)\|_{\varphi_q} = \|\sigma_{-t}^{\varphi_q}(W_q(\xi_n)) - W_q(\xi_n)\|_{\varphi_q}$$ and that on uniformly bounded sets, the norm $\|\cdot\|_\varphi$ induces the strong operator topology. 

The equivalence $(\rm 2) \Leftrightarrow (\rm 3)$ follows from \cite[Theorem 1.5]{MT13}. The last statement is trivial.
\end{proof}

The next result shows that $\Gamma_q((H_\R)_{U, \omega}, U_\omega)\dpr$ canonically embeds into the ultraproduct of $ \Gamma_q(H_\R, U)\dpr$.

\begin{thm}\label{thm-ultraproduct}
	Let $q\in (-1,1)$ and let $(H_\R, U_t)$ be any strongly continuous orthogonal representation. We denote by $(M_q,\varphi_q)$ the associated $q$-Araki--Woods algebra with vacuum state. Then the map
	$$\iota : \Gamma_q((H_\R)_{U, \omega}, U_\omega)\dpr \to M_q^\omega : W_q(j_\omega((\xi_n)_\omega)) \mapsto (W_q(j(\xi_n)))_\omega$$
extends to a unital $\ast$-embedding which preserves the vacuum states.

After identifying $\Gamma_q((H_\R)_{U, \omega}, U_\omega)\dpr$ with its image $\iota(\Gamma_q((H_\R)_{U, \omega}, U_\omega)\dpr)$ in $M_q^\omega$, we have the following $\sigma^{\varphi_q^\omega}$-invariant inclusions
	$$M_q \subset \Gamma_q((H_\R)_{U, \omega}, U_\omega)\dpr \subset M_q^\omega.$$
\end{thm}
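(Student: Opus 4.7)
The plan is to define $\iota$ at the level of the Wick $*$-algebra generated by the $W_q(j_\omega(\zeta))$'s, verify it preserves vacuum expectations via the $q$-Wick formula, and extend it to a normal $*$-embedding using faithfulness of the two vacuum states. For well-definedness on generators, pick $\zeta = (\xi_n)_\omega \in (H_\R)_{U,\omega}$ represented by a $(U,\omega)$-equicontinuous sequence $(\xi_n)_n$ in $H_\R$. The last sentence of the preceding Proposition gives $(j(\xi_n))_n \in \mathfrak E(H, U, \omega)$, and the equivalence $(1) \Leftrightarrow (3)$ of that Proposition then gives $(W_q(j(\xi_n)))_n \in \mathfrak M^\omega(M_q)$, so $(W_q(j(\xi_n)))_\omega$ is a well-defined self-adjoint element of $M_q^\omega$ independent of representative modulo $\mathfrak I_\omega$. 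I would extend $\iota$ complex-linearly and multiplicatively to the unital $*$-algebra $\mathcal A_\omega \subset \Gamma_q((H_\R)_{U,\omega}, U_\omega)\dpr$ generated by all such $W_q(j_\omega(\zeta))$; this is legitimate since $\mathfrak M^\omega(M_q)$ is itself a $*$-algebra.

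Next, I would verify state-preservation using the $q$-Wick formula from \cite{BS90, Hi02}, which expresses any joint moment $\varphi_q(W_q(\eta_1) \cdots W_q(\eta_m))$ as a finite sum over pair partitions $\pi$ of $\{1, \ldots, m\}$, weighted by $q^{i(\pi)}$ and by products of inner products $\langle T\eta_i, \eta_j \rangle_H$. The analogous formula for moments in $\Gamma_q((H_\R)_{U,\omega}, U_\omega)\dpr$ uses inner products with the operator $T_\omega = I A_\omega^{-1/2}$ on $H_{U,\omega}$. The identification recorded just before the Proposition shows that on the core $\mathcal D_0$, the maps $j_\omega$ and (by spectral calculus applied to $\log A_\omega$) $T_\omega$ act componentwise on representing sequences; hence $\langle T_\omega j_\omega(\zeta_i), j_\omega(\zeta_j)\rangle = \lim_{n\to\omega} \langle T j(\xi_{i,n}), j(\xi_{j,n})\rangle_H$ for $\zeta_i \in \mathcal D_0$, and this extends to all of $H_{U,\omega}$ by density and continuity of $j_\omega$. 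Combining this with the identity $\varphi_q^\omega((y_n)_\omega) = \lim_{n\to\omega}\varphi_q(y_n)$ and summing the partition expansion yields $\varphi_q^\omega \circ \iota = \psi$ on $\mathcal A_\omega$, where $\psi$ denotes the vacuum state on $\Gamma_q((H_\R)_{U,\omega}, U_\omega)\dpr$. Faithfulness of both states and $\sigma$-weak density of $\mathcal A_\omega$ then promote $\iota$ to a normal, injective, state-preserving $*$-homomorphism.

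For the chain $M_q \subset \Gamma_q((H_\R)_{U,\omega}, U_\omega)\dpr \subset M_q^\omega$, the first inclusion is the functorial embedding induced by the $U$-equivariant inclusion $(H_\R, U) \hookrightarrow ((H_\R)_{U,\omega}, U_\omega)$ via constant sequences, and under $\iota$ it recovers the canonical embedding $M_q \subset M_q^\omega$. For $\sigma^{\varphi_q^\omega}$-invariance, recall $\sigma_t^{\varphi_q}(W_q(\eta)) = W_q(U_{-t}\eta)$, and that $\sigma_t^{\varphi_q^\omega}$ acts coordinate-wise on representatives; hence $\sigma_t^{\varphi_q^\omega}(\iota(W_q(j_\omega(\zeta)))) = (W_q(U_{-t} j(\xi_n)))_\omega = \iota(W_q(j_\omega((U_\omega)_{-t}\zeta)))$, which coincides with the image under $\iota$ of the vacuum modular action on $\Gamma_q((H_\R)_{U,\omega}, U_\omega)\dpr$; thus the intermediate subalgebra is globally $\sigma^{\varphi_q^\omega}$-invariant. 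The main technical obstacle is Step 2: the $q$-Wick expansion involves the unbounded operator $T = IA^{-1/2}$, and the compatibility of $T_\omega$ with componentwise action on sequences rests on the spectral core argument from \cite{AOS13} assembled just above the Proposition. Once that commutation is secured, the rest is a routine partition-sum manipulation at the level of moments.
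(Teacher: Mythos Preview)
Your proposal is correct and supplies exactly the details the paper omits in its one-line proof (``straightforward by the previous proposition''): well-definedness on generators via the Proposition, moment preservation via the $q$-Wick formula and the componentwise action of $j_\omega$, and extension by the standard GNS/faithfulness argument. One minor simplification: the ``main technical obstacle'' you flag is milder than you suggest, since $Tj = I\sqrt{2}(A+1)^{-1/2}$ is bounded (and likewise $T_\omega j_\omega$), so the inner products $\langle T j(\xi_{i,n}), j(\xi_{j,n})\rangle_H$ pass to the ultralimit directly by continuity, without a separate appeal to the core $\mathcal D_0$ beyond what the paper already records for $j_\omega$.
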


\begin{proof}
The proof is straightforward by the previous proposition.
\end{proof}

To use this theorem in our work, we will need the following lemma.

\begin{lem}\label{lem-ultraproduct}
	The following statements hold true.
\begin{enumerate}
	\item [$(\rm 1)$] We have $\sigma(A) \setminus \{0\} \subset \sigma_p(A_\omega)$, where $\sigma$ (resp.\ $\sigma_p$) denotes the spectrum (resp.\ point spectrum).  

	\item	 [$(\rm 2)$] If $(H_\R,U_t)$ is weakly mixing, then there is a separable closed subspace $L_\R\subset (H_\R)_{U,\omega} \ominus H_\R$ such that the restriction of $U_\omega$ on $L_\R$ is almost periodic and that $\Gamma_q(L_\R , U_\omega)\dpr$ is a non-amenable factor.
\end{enumerate}
\end{lem}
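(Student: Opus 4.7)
For assertion~(1), the plan is to combine Weyl's criterion with a spectral localization argument. Given $\lambda \in \sigma(A)\setminus\{0\}$, the spectral theorem yields, for all $n$ large enough, unit vectors $\xi_n \in E_A([\lambda - 1/n,\lambda + 1/n])H$, which are spectrally supported in $[\lambda/2,2\lambda]$. Uniformly in such $n$,
\[ \sup_{|t|\leq \delta}\|U_t\xi_n - \xi_n\| \;\leq\; \sup_{\mu\in[\lambda/2,2\lambda]}\sup_{|t|\leq \delta}|\mu^{it}-1| \xrightarrow[\delta\to 0]{} 0, \]
so $(\xi_n)_n$ is $(U,\omega)$-equicontinuous and $\xi := (\xi_n)_\omega$ is a unit vector in $H_{U,\omega}$ lying in the core $\mathcal D_0$. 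The identity $A_\omega(\xi_n)_\omega = (A\xi_n)_\omega$ on $\mathcal D_0$, combined with $\|A\xi_n - \lambda\xi_n\|\to 0$, then yields $A_\omega\xi = \lambda\xi$, so $\lambda \in \sigma_p(A_\omega)$.

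For assertion~(2), weak mixing of $(H_\R,U_t)$ amounts to the spectral measure of $A$ on $H$ being non-atomic; in particular $\sigma(A)\setminus\{1\}$ is uncountable and $A$ has no point spectrum. Choose a countable set $\Lambda\subset \sigma(A)\setminus\{1\}$ whose orbits $\{\lambda,\lambda^{-1}\}$ are pairwise disjoint. For each $\lambda \in \Lambda$, part~(1) provides an eigenvector $\xi_\lambda\in H_{U,\omega}$ with $A_\omega\xi_\lambda = \lambda\xi_\lambda$. Since $H \subset H_{U,\omega}$ is invariant under bounded Borel functions of $A_\omega$ (restricting to $A$), the spectral projection $E_{A_\omega}(\{\lambda\})$ vanishes on $H$, forcing $\xi_\lambda \perp H$ in $H_{U,\omega}$; the involution $I$ preserves $H^\perp$, hence $I\xi_\lambda \perp H$ as well. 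Using $IA_\omega I = A_\omega^{-1}$, the vector $I\xi_\lambda$ is an eigenvector with eigenvalue $\lambda^{-1}$, and
\[ (V_\lambda)_\R := \{\alpha\xi_\lambda + \bar\alpha I\xi_\lambda : \alpha \in \C\} \]
is a two-real-dimensional, $U_\omega$-invariant subspace of $(H_\R)_{U,\omega}\ominus H_\R$. Setting $L_\R := \bigoplus_{\lambda\in\Lambda}(V_\lambda)_\R$ (orthogonal sum by the disjoint-orbits condition and self-adjointness of $A_\omega$) produces a separable real $U_\omega$-invariant subspace of $(H_\R)_{U,\omega}\ominus H_\R$ on which $U_\omega$ acts almost periodically.

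It remains to show that $\Gamma_q(L_\R,U_\omega|_{L_\R})\dpr$ is a non-amenable factor. Factoriality should follow from \'Sniady's theorem \cite{Sn03}, as $\dim_\R L_\R = \infty$ and $U_\omega|_{L_\R}$ has infinitely many distinct eigenvalues; non-amenability should then follow from the known non-injectivity results for $q$-deformed Araki--Woods algebras in the almost periodic setting, extending Nou's theorem for $q$-Gaussians. The main obstacle, in my view, lies precisely at this last step: the Hilbert-space analysis of $H_{U,\omega}$ delivers parts~(1) and (2) rather directly, but non-amenability of the resulting $q$-Araki--Woods algebra requires independent input from the Bo\.zejko--Speicher combinatorial theory that is not visible from the ultraproduct construction itself.
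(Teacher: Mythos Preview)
Your argument for part~(1) is correct and essentially matches the paper's: both produce a Weyl sequence spectrally localized near~$\lambda$, verify $(U,\omega)$-equicontinuity, and read off the eigenvalue in the ultraproduct. The paper works with $\log A$ rather than with $A$ directly, but this is cosmetic.

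For part~(2) your construction of $L_\R$ is sound and parallel to the paper's, and your orthogonality argument (via $E_{A_\omega}(\{\lambda\})|_H=0$) is a clean alternative to the paper's observation that any Weyl sequence for a weakly mixing representation converges weakly to~$0$. The gap is in the final step, and it is self-inflicted: you never require the chosen eigenvalues to be \emph{bounded}. Since the spectral measure of $A$ on the weakly mixing part is non-atomic, $\sigma(A)\cap(1,\infty)$ is uncountable, so one may pick $\{\lambda_n\}_{n\in\N}\subset\sigma(A)\cap(1,\infty)$ with $\sup_n\lambda_n<\infty$. With this single adjustment the paper invokes \cite[Theorems~2.2 and~3.2]{Hi02}, which under precisely this boundedness hypothesis yield that $\Gamma_q(L_\R,U_\omega)\dpr$ is a non-amenable factor; no separate appeal to \'Sniady or to extensions of Nou's theorem is needed. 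Without the bound your proposed route through \cite{Sn03} would also stall, since \'Sniady's estimates (compare Theorem~\ref{full thm}) themselves require a uniform spectral bound~$C$. So the obstacle you flag is not that the requisite operator-algebraic input is inaccessible from the ultraproduct side, but that you did not impose the one hypothesis that makes the existing black boxes apply.
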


\begin{proof}
	$(\rm 1)$ This is a well known fact but we include a short proof. Let $\lambda \in \sigma(A) \setminus \{0\}$. Then $\log(\lambda) \in \sigma(\log(A))$. Choose a sequence of unit vectors $\xi_n \in \mathcal D(\log(A))$ such that $\lim_n \|(\log(A) - \log(\lambda)) \xi_n \|= 0$. Let $\varepsilon > 0$ and choose $\delta > 0$ such that $\delta |\log(\lambda)| \leq \varepsilon/2$. For every $t \in [-\delta, \delta]$ and every $x \in \R$, since $|\exp({\rm i}t x) - 1| \leq \delta |x|$, it follows that
\begin{align*}
	\sup_{t \in [-\delta, \delta]} \|U_t \xi_n - \xi_n\| 
	&\leq \sup_{t \in [-\delta, \delta]} \|\lambda^{- {\rm i} t}U_t \xi_n - \xi_n \| +  \sup_{t \in [-\delta, \delta]} |\lambda^{{\rm i}t} - 1| \\
	&\leq \sup_{t \in [-\delta, \delta]}  \|(\exp({\rm i} t (\log(A) - \log(\lambda))) -1) \xi_n\| + \delta |\log(\lambda)|\\
	&\leq \delta \|(\log(A) - \log(\lambda)) \xi_n\| + \varepsilon/2 \to \varepsilon/2 \quad (\text{as }n\to \infty).
\end{align*}
This implies that $(\xi_n)_n$ is $(U, \omega)$-equicontinuous and we can define $\xi = (\xi_n)_\omega \in H_{U, \omega}$. We have $(U_\omega)_t \xi = \lambda^{{\rm i}t} \xi$ for every $t \in \R$ and this shows that $\lambda \in \sigma_p(A_\omega)$.

	$(\rm 2)$ Let $\lambda \in \sigma_p(A_\omega)$ and  $\xi = (\xi_n)_\omega \in H_{U, \omega}$ be any unit vector such that $\lambda^{{\rm i}t} \xi = (U_\omega)_t \xi$ for every $t \in \R$. Since $U$ is weakly mixing, it follows that $\xi_n \to 0$ weakly as $n \to \omega$ and so $\xi \in H_{U, \omega} \ominus H$. 

Since $U$ is weakly mixing, there is an infinite subset $\{\lambda_n\}_{n\in \N} \subset \sigma(A) \cap (1,\infty)$ such that $\sup_{n\in \N}\lambda_n <\infty$. For any $\lambda_n$, fix a unit eigenvector $\xi_n \in H_{U, \omega} \ominus H$. One can then find a unique real 2-dimensional subspace $L_\R^n \subset (H_\R)_{U, \omega} \ominus H_\R$ such that $\xi_n \in L_\R^n \otimes_\R \C$ and that $U_\omega$ acts on $L_\R^n \otimes_\R \C$ with eigenvalue $\lambda_n$ (and $\lambda_n^{-1}$). Define a closed subspace 
	$$ L_\R:=\bigoplus_{n\in \N} L_\R^n \subset (H_\R)_{U, \omega} \ominus H_\R,$$
so that the restriction of $U_\omega$ on $L_\R$ is almost periodic. By \cite[Theorems 2.2 and 3.2]{Hi02}, since $\sup_{n\in \N}\lambda_n <\infty$, $\Gamma_q(L_\R , U_\omega)\dpr$ is a non-amenable factor.
\end{proof}

\section{Two key lemmas}

In this section, we prove two key lemmas. 
Throughout this section, we fix $q\in (-1,1)$ and any strongly continuous orthogonal representation $(H_\R,U_t)$. We keep the same notation as in previous sections, such as $M_q = \Gamma_q(H_\R,U_t)\dpr$, $\varphi_q=\langle \Omega, \, \cdot \, \Omega \rangle$ and $j\colon H_\R \to H$. 
For any real subspace $D_\R \subset H_\R$ that is globally preserved by $(U_t)_t$, we let $M_q(D_\R) := \Gamma_q(D_\R,U_t)\dpr$ and regard it as a subalgebra of $M_q$ with expectation. 
For each $N\in \N$, we denote by $P_N\in \B(\mathcal F_q(H))$ the orthogonal projection onto $H^{\otimes N}$ and put $P_{\leq N} := \sum_{k=0}^{N} P_k$. We fix a free ultrafilter $\omega$ on $\N$.

\subsection*{A lemma for commutants}

The first key lemma is for commutants of subalgebras arising from real subspaces. The proof here is strongly inspired by  \cite[Theorem 3]{SW16} (see also \cite{Ri04}).

\begin{lem}\label{commutant lemma}
	Let $D_\R \subset H_\R$ be a real subspace which is globally preserved by $(U_t)_{t}$ and let $p\in M_q(D_\R)$ be any nonzero projection. Assume that there is a unitary $u=(u_n)_\omega \in pM_q(D_\R)^\omega p$ and $\delta\in [0,1)$ such that
	$$ \lim_{n\to \omega} \| P_{\leq N} u_n \Omega \|_q \leq \delta ,\quad \text{for all }N\in \N.$$
Then any $x\in pM_q p$ which commutes with $u$ satisfies 
	$$ \|x - E_{D_\R}(x) \|_{\varphi_q} \leq \delta \|x - E_{D_\R}(x) \|_\infty ,$$
where $E_{D_\R}\colon M_q \to M_q(D_\R)$ is the $\varphi_q$-preserving conditional expectation.
\end{lem}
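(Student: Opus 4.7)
Let $y := x - E_{D_\R}(x) \in pM_qp$. Then $E_{D_\R}(y)=0$, equivalently $y\Omega \in \mathcal{F}_q(H)\ominus \mathcal{F}_q(D)$, where we write $D := D_\R + \mathrm{i}\,D_\R$; the target inequality becomes $\|y\Omega\|_{\varphi_q}^2 \leq \delta^2\|y\|_\infty^2$. The plan is to compute $\|yu_n\Omega\|_{\varphi_q}^2$ in two different ways: commutation will force the limit to equal $\|y\Omega\|_{\varphi_q}^2$, while a low/high Fock-degree splitting of $u_n\Omega$ will force an asymptotic upper bound of $\delta^2\|y\|_\infty^2$.

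First I would record the geometric setup. Since $u_n \in M_q(D_\R)$, both $\mathcal{F}_q(D)$ and $\mathcal{F}_q(D)^\perp$ are invariant under each $u_n$. Since $E_{D_\R}$ is $M_q(D_\R)$-bimodular and $E_{D_\R}(y)=0$, for every $z \in M_q(D_\R)$ one has $E_{D_\R}(yz)=0$, so $y$ maps $\mathcal{F}_q(D)$ into $\mathcal{F}_q(D)^\perp$. From $[x,u]=0$ in $M_q^\omega$ we have $\|xu_n\Omega - u_n x\Omega\|_{\varphi_q}\to 0$; projecting onto $\mathcal{F}_q(D)^\perp$ kills the $E_{D_\R}(x)$-parts (they live in $\mathcal{F}_q(D)$) and yields $\|yu_n\Omega - u_n y\Omega\|_{\varphi_q}\to 0$ along $\omega$.

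For the first computation of $\|yu_n\Omega\|_{\varphi_q}^2$ I would use that $u_n^*u_n \to p$ in $M_q^\omega$ and $py\Omega = y\Omega$ (because $y = pyp$) to get $\lim_{n\to\omega}\|u_n y\Omega\|_{\varphi_q}^2 = \varphi_q(y^*py) = \varphi_q(y^*y) = \|y\Omega\|_{\varphi_q}^2$, and hence $\lim_{n\to\omega}\|yu_n\Omega\|_{\varphi_q}^2 = \|y\Omega\|_{\varphi_q}^2$. For the second computation I would exploit the clean identity
\[
\|yu_n\Omega\|_{\varphi_q}^2 = \varphi_q(u_n^* E_{D_\R}(y^*y)\, u_n) = \langle a\, u_n\Omega, u_n\Omega\rangle_q,
\]
where $a := E_{D_\R}(y^*y) \in pM_q(D_\R)p$ is positive with $\|a\|_\infty \leq \|y\|_\infty^2$ (the identity holds because $E_{D_\R}$ is $\varphi_q$-preserving and $M_q(D_\R)$-bimodular). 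Splitting $u_n\Omega = P_{\leq N}u_n\Omega + P_{>N}u_n\Omega$, the pairings involving $P_{\leq N}u_n\Omega$ are controlled directly by the hypothesis $\lim_{n\to\omega}\|P_{\leq N}u_n\Omega\|_q \leq \delta$, so the remaining task is to bound the high-degree pairing $\langle a P_{>N}u_n\Omega, P_{>N}u_n\Omega\rangle_q$ by $\delta^2\|a\|_\infty$ modulo errors vanishing as $n\to\omega$ and then $N\to\infty$.

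The main obstacle is exactly this last estimate. A trivial bound gives only $\|a\|_\infty\|P_{>N}u_n\Omega\|^2$, which is much too weak. To extract the factor $\delta^2$ I would apply $E_{D_\R}$ to the identity $[y^*y,u]=0$ to deduce $[a,u]=0$ in $M_q^\omega$, and then, following the strategy of \cite{SW16} (see also \cite{Ri04}), expand $a$ and $u_n$ via the $q$-Wick calculus on $\mathcal{F}_q(D)$. The commutation forces the centered part $a - \varphi_q(a)\mathbf{1}$ to act in a controlled way on the high-Fock-degree portion of $u_n\Omega$, and this is where the gain of a factor $\delta^2$ materialises. This delicate $q$-Fock bookkeeping, showing that the commutation with a highly non-trivial $u_n$ forces $a$ to "see" only the low-degree part of $u_n\Omega$, is the technically hardest step of the argument.
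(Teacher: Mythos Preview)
Your reformulation via $a := E_{D_\R}(y^*y)$ is a dead end: the identity $\lim_{n\to\omega}\langle a\,u_n\Omega,u_n\Omega\rangle_q = \varphi_q^\omega(u^*au)=\varphi_q(pa)=\varphi_q(a)=\|y\Omega\|_{\varphi_q}^2$ shows that your ``second computation'' merely restates the first one. Once you pass from $y$ to $a$ you retain only that $a\in pM_q(D_\R)p$ is positive with $\|a\|_\infty\le\|y\|_\infty^2$ and $[a,u]=0$; but these constraints alone cannot bound $\varphi_q(a)$ by $\delta^2\|y\|_\infty^2$, since for instance $a=\|y\|_\infty^2\,p$ satisfies all of them while $\varphi_q(a)=\|y\|_\infty^2\varphi_q(p)$. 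The crucial datum $y\Omega\in\mathcal F_q(D)^\perp$ is discarded in this passage, and no Wick bookkeeping applied to $a$ and $u_n$ alone (both living entirely in $M_q(D_\R)$) can recover it. Your sketch of the ``technically hardest step'' is therefore not a plan but a restatement of the difficulty.

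The paper keeps the orthogonality information in play by pairing $y\Omega$ against test vectors $\xi\in\mathcal F_q(D)^\perp$ and rewriting $\langle\xi,y\Omega\rangle_q=\langle W_q^r(\xi)u\Omega,\,y\,u\Omega\rangle_q$ via the \emph{right} Wick operator $W_q^r(\xi)\in M_q'$. After splitting $u\Omega=P_{\leq N}u\Omega+P_{>N}u\Omega$ on the right-hand slot, the high-degree contribution $\langle W_q^r(\xi)u\Omega,\,y\,P_{>N}u\Omega\rangle_q$ is shown to vanish as $N\to\infty$: one approximates $y$ by left Wick words $W_q(\eta)$, and the interaction of left and right creation/annihilation operators produces a factor $\sum_k q^kP_k$ whose tail $\sum_{k>N-c}q^kP_k$ goes to $0$ in operator norm. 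The surviving low-degree term then gives $|\langle\xi,y\Omega\rangle_q|\le\delta\|y\|_\infty\|\xi\|_q$ directly from the hypothesis on $u$. The entrance of $W_q^r(\xi)$ --- which encodes $E_{D_\R}(y)=0$ through the choice of test vector in $\mathcal F_q(D)^\perp$ --- is precisely the idea your approach is missing.
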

\begin{proof}
	Put $x^\circ := x-E_{D_\R}(x)$ and fix $\xi \in \mathcal F_q(D)^\perp \subset \mathcal F_q(H)$. We will compute $\langle \xi,x^\circ \Omega \rangle_q$. We assume that $\xi$ is contained in a linear span of elements of the form that 
	$$\eta_1\otimes \cdots \otimes \eta_n, \quad \text{for some }n\in \N, \ \eta_k\in Ij(H_\R), \ 1\leq k \leq n. $$
We can find a unique element $W_q^r(\xi)\in M_q'$ such that $W_q^r(\xi)\Omega = \xi$. Since $x^\circ $ also commutes with $u$, one has
	$$\langle \xi,x^\circ \Omega \rangle_q = \langle u\xi,x^\circ  u \Omega \rangle_q= \langle W_q^r(\xi)u\Omega ,x^\circ  u \Omega \rangle_q ,$$
where the inner product is interpreted in $\rL^2(M_q^\omega)$ (e.g.\ $\langle u\xi,x^\circ  u \Omega \rangle_q=\lim_{n\to \omega}\langle u_n\xi,x^\circ  u_n \Omega \rangle_q$) and hence
	$$\langle \xi,x^\circ \Omega \rangle_q= \lim_{N\to \infty} \left(\langle W_q^r(\xi)u\Omega ,x^\circ   P_{\leq N} u \Omega \rangle_q + \langle W_q^r(\xi) u\Omega ,x^\circ  P_{\leq N}^\perp u \Omega \rangle_q\right),$$
where $P_{\leq N}^\perp = 1- P_{\leq N}$. 
We denote by $\Phi_\xi(a) := \lim_{N\to \infty} \langle W_q^r(\xi) u\Omega , a  P_{\leq N}^\perp u \Omega \rangle_q$, which corresponds to the second term in the above limit. We claim that  $\Phi_\xi(a)=0$ for all $a\in M_q$.

	To see the claim, since $u$ is in the ultraproduct, the map 
	$$M_q\ni a \mapsto a W_q^r(\xi)u\Omega\in M^\omega_q M_q' \Omega \subset \rL^2(M^\omega_q)$$
is continuous (from the strong topology to the norm topology) and hence we may assume that $a$ is of the form $a=W_q(\eta)$ for some $\eta=\eta_1\otimes \cdots \otimes \eta_m$, $\eta_k\in j(H_\R)$, $k=1,\ldots,m$. As in the proof of \cite[Theorem 3]{SW16}, we have only to check the following condition:
	$$ \lim_{N\to \infty} \left\|\left(\sum_{k=0}^\infty q^k P_{k}\right) X P_{\leq N}^\perp u \Omega\right\|_q =0 ,$$
where $X$ is any element which is contained in the $\ast$-algebra generated by $\ell_q(e),r_q(f)$ for some $e,f\in H$. To see this, observe that $X$ changes lengths in the Fock space at most finitely many times and therefore we can write $X P_{\leq N}^\perp = P_{\leq (N - c(X))}^\perp X P_{\leq N}^\perp$ for some $c(X)>0$. Then we can use $\lim_{N\to \infty}\|\sum_{k=0}^\infty q^k P_{k} P_{\leq (N - c(X))}^\perp\|_\infty = 0$ to deduce the above convergence. 
We conclude that $\Phi_\xi(a)=0$ for all $a\in M_q$.

	Now we have the following equality
	$$\langle \xi,x^\circ \Omega \rangle_q = \lim_{N\to \infty}\langle W_q^r(\xi) u\Omega ,x^\circ   P_{\leq N} u\Omega \rangle_q = \lim_{N\to \infty}\langle  (x^\circ )^*u \xi , P_{\leq N} u\Omega \rangle_q.$$
Combined with the assumption on $u$, we obtain 
	$$|\langle \xi,x^\circ \Omega \rangle_q| \leq \delta \|x^\circ \|_\infty \|\xi\|_q.$$
By the choice of $\xi$, this inequality actually holds for all $\xi\in\mathcal F_q(D)^\perp$. Since $x^\circ \Omega \in \mathcal F_q(D)^\perp$  we obtain the desired condition $ \| x^\circ  \Omega \|_q \leq \delta \|x^\circ \|_\infty$.
\end{proof}

The next proposition is a special case of the previous lemma. This slightly generalizes \cite[Proposition 6.(2)]{SW16} and should be compared to  \cite[Proposition 3.1]{Ue10}.

\begin{prop}
	Let $D_\R \subset H_\R$ be a real subspace which is globally preserved by $(U_t)_{t}$. Assume that $D_\R$ is finite dimensional. 
Then for any projection $p\in M_q(D_\R)$ and any diffuse von Neumann subalgebra $A \subset pM_q(D_\R)p$ with expectation, we have $A' \cap pM_q p \subset pM_q(D_\R)p$.
\end{prop}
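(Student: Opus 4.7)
The plan is to reduce this proposition to Lemma \ref{commutant lemma} with the optimal constant $\delta = 0$. Concretely, I need to produce a single unitary $u = (u_n)_\omega \in p M_q(D_\R)^\omega p$ such that
\begin{enumerate}
\item $\lim_{n \to \omega} \|P_{\leq N} u_n \Omega\|_q = 0$ for every $N \in \N$, and
\item every $x \in A' \cap p M_q p$ commutes with $u$.
\end{enumerate}
Once both items are in place, Lemma \ref{commutant lemma} gives $\|x - E_{D_\R}(x)\|_{\varphi_q} \leq 0$, and since $\Omega$ is separating for $M_q$ we conclude $x = E_{D_\R}(x) \in p M_q(D_\R) p$.

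For item (2), I would use the diffuseness of $A$ to pick a sequence of unitaries $v_n \in \mathcal U(A)$ that tend to $0$ in the weak operator topology (for example, embed a diffuse abelian subalgebra $L^\infty([0,1])$ into $A$ and take characters via Riemann--Lebesgue). Any $x \in A' \cap p M_q p$ commutes with each $v_n$, hence with the class $v := (v_n)_\omega$ in $p M_q(D_\R)^\omega p$. Note $v$ is unitary in this corner because each $v_n$ is.

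The crux is item (1), and this is where the finite-dimensionality hypothesis on $D_\R$ comes in essentially. Since $D_\R$ is finite dimensional, so is its complexification $D = D_\R \otimes_\R \C$, and hence each layer $D^{\otimes k}$ of the $q$-Fock space $\mathcal F_q(D) \subset \mathcal F_q(H)$ is finite dimensional. The truncation $P_{\leq N}$ therefore restricts to a finite rank (in particular compact) operator on $\mathcal F_q(D) = \overline{M_q(D_\R)\Omega}$. Weak convergence $v_n \to 0$ in $M_q(D_\R)$ gives weak convergence $v_n \Omega \to 0$ in $\mathcal F_q(D)$, which combined with compactness yields $\|P_{\leq N} v_n \Omega\|_q \to 0$ for each fixed $N$. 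This delivers (1) with $\delta = 0$.

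I do not anticipate a substantial obstacle here, since the statement is really a packaging of Lemma \ref{commutant lemma}: the only nontrivial input is the finite-rankness of $P_{\leq N}|_{\mathcal F_q(D)}$, which is a direct consequence of $\dim_\R D_\R < \infty$. The mild subtlety worth checking is that the unitaries $v_n$ chosen in $A$ really lie in $p M_q(D_\R) p$ with the correct support projection $p$, so that $v = (v_n)_\omega$ is a unitary in $p M_q(D_\R)^\omega p$ and Lemma \ref{commutant lemma} applies verbatim; this is automatic from $A \subset p M_q(D_\R) p$.
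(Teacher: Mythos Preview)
Your approach is exactly the paper's: pick weakly null unitaries in $A$, use $\dim D_\R<\infty$ to make $P_{\leq N}|_{\mathcal F_q(D)}$ finite rank so that $\|P_{\leq N} v_n\Omega\|_q\to 0$, then invoke Lemma~\ref{commutant lemma} with $\delta=0$.

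There is, however, one genuine technical point you skip, and it is not the support-projection issue you flag at the end. Lemma~\ref{commutant lemma} requires $u=(v_n)_\omega$ to lie in the \emph{Ocneanu} ultraproduct $pM_q(D_\R)^\omega p$, and for non-tracial algebras not every bounded sequence belongs to $\mathcal M^\omega$: one needs $(\sigma^{\varphi},\omega)$-equicontinuity. An arbitrary copy of $L^\infty([0,1])$ inside $A$ need not consist of modular-fixed elements, so your exponentials $e^{2\pi \ri n x}$ might not represent anything in $M_q(D_\R)^\omega$. The paper handles this by first choosing a faithful normal state $\psi$ on $A$ with $A_\psi$ diffuse (always possible for diffuse $A$) and taking $u_n\in\mathcal U(A_\psi)$; since $A\subset pM_q(D_\R)p$ is with expectation, these unitaries lie in the centralizer of the extended state $\psi\circ E$, hence are fixed by its modular flow and automatically give a unitary $(u_n)_\omega\in A_\psi^\omega\subset pM_q(D_\R)^\omega p$. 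Your argument goes through verbatim once you place your abelian subalgebra inside such a centralizer.
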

\begin{proof}
	Let $\psi$ be any faithful normal state on $A$ such that $A_\psi$ is diffuse. Take any sequence $u_n \in \mathcal{U}(A_\psi)$ such that $u_n \to0$ weakly. Observe that $\lim_{n\to \infty}\|P_{\leq N}u_n \Omega\|_q \to 0$ for any $N\in \N$, since $D_\R$ is finite dimensional. Since $(u_n)_n$ determines a unitary element in the ultraproduct $A_\psi^\omega \subset p M_q(D)^\omega p$, we can use Lemma \ref{commutant lemma} for the case $\delta=0$ and get the conclusion.
\end{proof}

\subsection*{A lemma for unitaries in ultraproducts}

Our second key lemma is to find a nice sequence of unitaries which gives an element in ultraproducts. We prepare a terminology and a lemma.

	Let $\C \langle X_1,Y_1, \ldots, X_n,Y_n\rangle$ denote the set of all non-commutative polynomials of $2n$-variables. For any fixed $\lambda_1,\ldots,\lambda_n\in [1,\infty)$, consider the unique action $\sigma$ of $\R$ on $\C \langle X_1,Y_1, \ldots, X_n,Y_n\rangle$ given by, for any $p\in \C \langle X_1,Y_1, \ldots, X_n,Y_n\rangle$ and $t\in \R$,
	$$ \sigma_t(p(X_1,Y_1, \ldots, X_n,Y_n)) = p(\lambda_1^{\ri t}X_1,\lambda_1^{-\ri t}Y_1, \ldots, \lambda_n^{\ri t}X_n,\lambda_n^{-\ri t}Y_n).$$
We will say that $p\in \C \langle X_1,Y_1, \ldots, X_n,Y_n\rangle$ is \textit{stable under $(\lambda_1,\ldots,\lambda_n)$-perturbation} if it is fixed by this action. 
This amounts to saying that $p$ is contained in the linear spans of all monomials $X_{i_1}^{\varepsilon_1}Y_{j_1}^{\delta_1}\cdots X_{i_m}^{\varepsilon_m}Y_{j_m}^{\delta_m}$ such that $\lambda_{i_1}^{\varepsilon_1}\lambda_{j_1}^{-\delta_1}\cdots \lambda_{i_m}^{\varepsilon_m}\lambda_{j_m}^{-\delta_m} = 1$, where $m\in \N$, $i_1,\ldots,i_m,j_1,\ldots,j_m \in \{1,\ldots,n\}$, and $\varepsilon_1,\ldots, \varepsilon_m, \delta_1,\ldots,\delta_m \in \N \cup \{0\}$. 

\begin{lem}\label{key lemma1}
	Assume that $\dim H_\R = 2 n< \infty$ with the decomposition $H_\R = \bigoplus_{i=1}^n H_\R^i$ such that $\dim{H_\R^i} =2$ and $(U_t)_t$ acts on $H_\R^i\otimes_\R \C$ with eigenvalue $\lambda_i\geq 1$ for all $i$. For each $i$, 
let $\xi_i\in H_\R^i \otimes_\R \C $ be a unit eigenvector for $\lambda_i$. Let $r \in (M_q)_{\varphi_q}$ be a nonzero projection such that $rM_qr$ is diffuse. 

Then for any $\varepsilon>0$ and $N\in \N$, there is a non-commutative polynomial $p\in \C \langle X_1,Y_1, \ldots, X_n,Y_n\rangle$, which is stable under $(\lambda_1,\ldots,\lambda_n)$-perturbation, satisfying the following conditions: with the notation $p_\xi:=p(W_q(\xi_1),W_q(\xi_1)^*, \ldots, W_q(\xi_n),W_q(\xi_n)^*) \in M_q$,
\begin{itemize}
	\item[$\rm (i)$] $\| p_\xi \|_\infty\leq 1$;

	\item[$\rm (ii)$] $\|P_{\leq N} p_\xi \Omega \|_q < \varepsilon$;

	\item[$\rm (iii)$] $\|(p_\xi^*p_\xi-r)\Omega\|_{q}+\|(p_\xi p_\xi^*-r)\Omega\|_{q}< \varepsilon$.
\end{itemize}
\end{lem}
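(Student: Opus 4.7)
The plan is to produce a ``partial Haar unitary'' $v_0$ inside the centralizer $r(M_q)_{\varphi_q}r$ that is orthogonal (to within $\varepsilon$) to the low-level Fock truncation, and then approximate $v_0$ in the strong-$*$ topology by stable polynomials of norm $\leq 1$ via Kaplansky density. First observe that since $H_\R$ is finite-dimensional and the spectrum of $A$ reduces to $\{\lambda_i^{\pm 1}\}$, the state $\varphi_q$ is almost periodic. As $r \in (M_q)_{\varphi_q}$, the restriction $\psi := \varphi_q(r)^{-1}\varphi_q(r\,\cdot\,r)$ is an almost periodic state on $rM_qr$ with centralizer $r(M_q)_{\varphi_q}r$, which is diffuse by Lemma \ref{almost periodic lemma}(1). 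Pick a Haar unitary $v$ in some diffuse abelian subalgebra of $r(M_q)_{\varphi_q}r$, so $v^* v = v v^* = r$ and $\varphi_q(v^m) = \varphi_q(r)\,\delta_{m,0}$. A direct computation using $r \in (M_q)_{\varphi_q}$ gives $\langle v^k\Omega, v^l\Omega\rangle_q = \varphi_q(r)\,\delta_{k,l}$, so $\{v^k\Omega\}_{k \in \N}$ is an orthogonal family of constant norm; Bessel's inequality then forces $v^k\Omega \to 0$ weakly in $\mathcal F_q(H)$, and since $P_{\leq N}$ has finite rank (as $\dim H_\R < \infty$), $\|P_{\leq N}v^k\Omega\|_q \to 0$. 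Choose $k_0$ so that $v_0 := v^{k_0}$ satisfies $\|P_{\leq N}v_0\Omega\|_q < \varepsilon/3$.

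Next, let $\mathcal P \subset M_q$ be the unital $*$-subalgebra of all stable polynomials in the generators $\{W_q(\xi_i), W_q(\xi_i)^*\}_{i=1}^n$; by definition $\mathcal P \subset (M_q)_{\varphi_q}$. The $\varphi_q$-preserving conditional expectation $E_0 \colon M_q \to (M_q)_{\varphi_q}$, obtained by averaging along the dual compact group action $\widehat\beta \colon \R \to \widehat\Lambda$ used in the proof of Lemma \ref{almost periodic lemma}(2), sends each monomial $W_q(\xi_{i_1})^{\varepsilon_1}\cdots W_q(\xi_{i_m})^{\varepsilon_m}$ to itself when it is stable under $(\lambda_1,\ldots,\lambda_n)$-perturbation and to $0$ otherwise. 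Hence $E_0$ maps the polynomial $*$-algebra onto $\mathcal P$, and since polynomials are SOT-dense in $M_q$ and $E_0$ is normal, $\mathcal P$ is SOT-dense in $(M_q)_{\varphi_q}$. By the strong-$*$ form of Kaplansky density, there exists $p_\xi \in \mathcal P$ with $\|p_\xi\| \leq 1$ such that both $\|(p_\xi - v_0)\Omega\|_q$ and $\|(p_\xi - v_0)^*\Omega\|_q$ are as small as desired.

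Condition (i) is then immediate, and condition (ii) follows from $\|P_{\leq N}(p_\xi - v_0)\Omega\|_q \leq \|(p_\xi - v_0)\Omega\|_q$ together with the choice of $v_0$. For (iii), decompose $p_\xi^* p_\xi - r = p_\xi^*(p_\xi - v_0) + (p_\xi^* - v_0^*)v_0$; since $v_0 \in (M_q)_{\varphi_q}$, modular theory gives $v_0\Omega = (Jv_0^*J)\Omega$ with $Jv_0^*J \in M_q'$, so commuting this operator past $(p_\xi^* - v_0^*) \in M_q$ yields
$$\|(p_\xi^* p_\xi - r)\Omega\|_q \leq \|(p_\xi - v_0)\Omega\|_q + \|(p_\xi^* - v_0^*)\Omega\|_q,$$
and an identical estimate holds for $p_\xi p_\xi^* - r$ by symmetry. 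Taking the Kaplansky approximation sufficiently fine delivers (iii). The main subtle point is the middle step: invoking the almost-periodic structure of $\varphi_q$ to obtain the conditional expectation $E_0$ onto the centralizer, which is what lets Kaplansky's theorem be applied inside $(M_q)_{\varphi_q}$ by polynomials that are genuinely stable rather than merely approximately so.
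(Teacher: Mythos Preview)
Your proof is correct and follows essentially the same approach as the paper's: find a (partial) unitary in $r(M_q)_{\varphi_q}r$ whose vacuum vector has small low-Fock component (exploiting $\dim H_\R<\infty$), then approximate it $\ast$-strongly by stable polynomials of norm $\leq 1$ via Kaplansky density. The only differences are cosmetic---you use powers of a Haar unitary where the paper uses an arbitrary weakly null unitary sequence, and you spell out the averaging argument for density of $\mathcal P$ in $(M_q)_{\varphi_q}$ and the estimate for (iii), both of which the paper leaves implicit.
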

\begin{proof}
	Observe that $r(M_q)_{\varphi_q}r$ is diffuse by Lemma \ref{almost periodic lemma}(1). So there is a sequence $u_n\in \mathcal{U}(r(M_q)_{\varphi_q}r)$ which converges to $0$ weakly. Since $\dim H_\R < \infty$,  $\lim_{n\to \infty}\|P_{\leq N} u_n \Omega\|_q=0$ for all $N\in \N$. 
Thus for any fixed $\varepsilon>0$ and $N\in \N$, we can choose $u\in \mathcal{U}(r(M_q)_{\varphi_q}r)$ such that $ \|P_{\leq N} u \Omega \|_q < \varepsilon$. We will approximate $u$ by algebraic elements.

Recall that for each $k$, $T{\xi_k}:=IA^{-1/2}\xi_k$ is an eigenvector for $\lambda_k^{-1}$ and $W_q(\xi_k)^* = W_q(T{\xi_k})$. Define $\mathcal A$ as a set of all 
	$$ p(W_q(\xi_1),W_q(\xi_1)^*, \ldots, W_q(\xi_n),W_q(\xi_n)^*) \in M_q,$$
where $p\in \C \langle X_1,Y_1, \ldots, X_n,Y_n\rangle$ is any non-commutative polynomial which is stable under $(\lambda_1,\ldots,\lambda_n)$-perturbation. It is easy to see that $\mathcal A$ is a unital $\ast$-subalgebra in $(M_q)_{\varphi_q}$ and that $\mathcal A \Omega$ is norm dense in $\rL^2((M_q)_{\varphi_q},\varphi_q)$. Since $\varphi_q$ is a trace on $(M_q)_{\varphi_q}$, we obtain that $\mathcal A$ is $\sigma$-weakly dense in $(M_q)_{\varphi_q}$. 
Thus the above unitary $u$ can be approximated by elements in $\mathcal A$ in the $\ast$-strong topology. We can find a non-commutative polynomial $p$, which is stable under $(\lambda_1,\ldots,\lambda_n)$-perturbation, such that $u$ and $p_\xi:= p(W_q(\xi_1),W_q(\xi_1)^*,\ldots,W_q(\xi_n),W_q(\xi_n)^*)$ are close enough in the $\ast$-strong topology and that $\|p_\xi\|_\infty\leq \|u\|_\infty=1$. This is the conclusion.
\end{proof}

Now we prove the second key lemma.

\begin{lem}\label{key lemma2}
	Let $H_\R = H_\R^{\rm ap}\oplus H_\R^{\rm wm}$ be the unique decomposition into the almost periodic and the weakly mixing part. Assume that $\dim_{\R}H_\R^{\rm ap} = \infty $ and $\Gamma_q(H_\R^{\rm ap},U_t)\dpr$ is non-amenable. Then there is a sequence $u_n \in \mathcal U( (M_q)_{\varphi_q})$ such that
$$\lim_{n\to \omega} \|P_{\leq N} u_n \Omega\|_q=0, \quad \text{for all }N\in \N.$$
\end{lem}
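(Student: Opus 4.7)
The plan is to produce, for each $n \in \N$, a single unitary $u_n \in (M_q)_{\varphi_q}$ with $\|P_{\leq n} u_n \Omega\|_q < 1/n$. Such a sequence automatically satisfies $\lim_{n \to \omega} \|P_{\leq N} u_n \Omega\|_q = 0$ for every fixed $N$, because $\|P_{\leq N} u_n \Omega\|_q \leq \|P_{\leq n} u_n \Omega\|_q < 1/n$ for all $n \geq N$. The key conceptual idea is to mimic, inside a sufficiently large finite-dimensional $U_t$-invariant subalgebra of $M_q$, the finite-rank observation implicit in the proof of Lemma \ref{key lemma1}: on the $q$-Fock space $\cF_q(V \otimes_\R \C)$ of a finite-dimensional real Hilbert space $V$, the truncation $P_{\leq N}$ has finite rank, so weak convergence of centralizer unitaries to $0$ upgrades to norm convergence of their vacuum vectors under $P_{\leq N}$.

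\textbf{Construction.} Using $\dim_\R H_\R^{\rm ap} = \infty$, I would choose an increasing sequence $V_1 \subset V_2 \subset \cdots \subset H_\R^{\rm ap}$ of finite-dimensional $U_t$-invariant real subspaces with $\dim V_k \to \infty$, arranged so that each $M_q^{(k)} := \Gamma_q(V_k, U_t)\dpr$ is diffuse. Since $V_k \subset H_\R^{\rm ap}$, the restriction $\varphi_q|_{M_q^{(k)}}$ is almost periodic, so Lemma \ref{almost periodic lemma}(1) yields that the centralizer $(M_q^{(k)})_{\varphi_q}$ is diffuse. Now fix $n \in \N$ and pick $k_n$ with $(M_q^{(k_n)})_{\varphi_q}$ diffuse. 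Select a sequence $w_m \in \cU((M_q^{(k_n)})_{\varphi_q})$ converging weakly to $0$ in $\rL^2(M_q^{(k_n)}, \varphi_q)$. Since $V_{k_n}$ is finite-dimensional, the subspace $P_{\leq n}\cF_q(V_{k_n} \otimes_\R \C)$ of $\cF_q(H)$ is finite-dimensional, and therefore $\|P_{\leq n} w_m \Omega\|_q \to 0$ as $m \to \infty$. Pick $m$ with $\|P_{\leq n} w_m \Omega\|_q < 1/n$ and set $u_n := w_m$. Because $\sigma_t^{\varphi_q}$ globally preserves $M_q^{(k_n)}$, the inclusion $(M_q^{(k_n)})_{\varphi_q} \subset (M_q)_{\varphi_q}$ holds, so $u_n$ is a unitary in the full centralizer, completing the argument.

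\textbf{Main obstacle.} The delicate point is the choice of $V_k$ so that the finite-dimensional $q$-Araki--Woods algebra $M_q^{(k)}$ is diffuse. Depending on the spectral structure of $U_t$ restricted to $H_\R^{\rm ap}$, one picks $V_k$ as a sum of sufficiently many trivial one-dimensional pieces (invoking diffuseness of tracial $q$-Gaussian algebras of rank $\geq 2$), as a sum of sufficiently many two-dimensional non-trivial eigenvalue pieces (invoking diffuseness of the corresponding finite-dimensional $q$-Araki--Woods factors from \cite{Hi02}), or as some combination of the two. The non-amenability hypothesis on $\Gamma_q(H_\R^{\rm ap}, U_t)\dpr$, together with $\dim_\R H_\R^{\rm ap} = \infty$, ensures that $H_\R^{\rm ap}$ contains sufficiently many such invariant subspaces for the construction to be carried out.
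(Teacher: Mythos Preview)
Your approach is correct and in fact simpler than the paper's. Both arguments pass to finite-dimensional $U_t$-invariant subspaces $V\subset H_\R^{\rm ap}$ and exploit that $P_{\leq N}$ has finite rank on $\cF_q(V\otimes_\R\C)$, but they differ in how a diffuse centralizer is obtained. The paper sets $N_k=\Gamma_q(\bigoplus_{i\leq k}H_\R^i,U_t)\dpr$, takes the central projection $z_k$ cutting out the part with no amenable direct summand, and uses the non-amenability hypothesis to prove $z_k\nearrow 1$; only then is Lemma~\ref{key lemma1} applied in the diffuse corner $z_kN_kz_k$, producing elements $p_\xi\in (M_q)_{\varphi_q}$ that are merely \emph{approximate} unitaries (so that $(p_\xi)_\omega$ is unitary in the ultraproduct). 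Your route bypasses both Lemma~\ref{key lemma1} and the $z_k$-machinery: once $M_q^{(k)}$ is diffuse, Lemma~\ref{almost periodic lemma}(1) gives a diffuse centralizer, hence genuine weakly-null unitaries, which matches the stated conclusion $u_n\in\mathcal U((M_q)_{\varphi_q})$ more directly.

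The only soft spot is your ``Main obstacle'' paragraph, where the diffuseness of $M_q^{(k)}$ is left vague and the non-amenability hypothesis is (incorrectly) credited for it. In fact diffuseness is immediate and needs neither non-amenability nor any appeal to \cite{Hi02}: for any nonzero $e\in V_k$ one has $j(e)\in K_\R$ and $Tj(e)=j(e)$, so $W_q(j(e))=\ell_q(j(e))+\ell_q(j(e))^*$ is a scaled $q$-semicircular element whose distribution under $\varphi_q$ is continuous; hence the generator subalgebra $\{W_q(j(e))\}\dpr\subset M_q^{(k)}$ is already diffuse. Consequently your argument actually proves the lemma under the sole hypothesis $\dim_\R H_\R^{\rm ap}=\infty$, whereas the paper's proof genuinely consumes the non-amenability of $\Gamma_q(H_\R^{\rm ap},U_t)\dpr$ through the claim $z_k\to 1$.
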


\begin{proof}
	Observe that $H_\R^{\rm ap}$ contains a countably infinite dimensional real subspace which is globally invariant under $(U_t)_t$ such that the associated $q$-Araki--Woods algebra is non-amenable. Hence up to exchanging $H_{\R}$ with this subspace, we may assume $H_\R$ is almost periodic and countably infinite dimensional. It has a decomposition $ H_\R = \bigoplus_{n\in \N} H_\R^n $ such that each $H_\R^n$ is 2-dimensional with eigenvalue $\lambda_n\geq 1$. Define a family of $q$-Araki--Woods algebras by 
	$$ N_n:= \Gamma_q(\oplus_{i=1}^n H_\R^i, U_t)\dpr, \quad \text{for all }n\in \N . $$
Observe that we have canonical inclusions $N_n \subset N_{n+1}\subset M_q$ with (vacuum state preserving) expectation for all $n\in \N$ and that $\bigcup_{n\in \N} N_n $ is $\sigma$-weakly dense in $M_q$. 
For each $n\in \N$, we take the unique central projection $z_n \in \mathcal {Z}(N_n) \subset M_q$ such that $N_n z_n$ has no amenable direct summand and $N_n z_n^{\perp}$ is amenable, where $z_n^\perp:=1-z_n$. 

\begin{claim}
	The sequence $(z_n)_n$ is increasing and it converges to $1$ strongly. 
\end{claim}
\begin{proof}[Proof of Claim]
	Fix $n\in \N$ and observe that $N_nz_{n+1}^\perp$ is amenable, since $N_{n+1}z_{n+1}^\perp$ is amenable. Since $z_{n+1}^\perp$ is contained in $N_n'$, if we denote by $w\in \mathcal Z(N_{n})$ the central support projection of $z_{n+1}^\perp$ in $N_n'$, then $N_n w$ is also amenable. We conclude that $w \leq z_n^\perp$ by the choice of $z_n$ and hence $z_{n+1}^\perp \leq z_n^\perp$. Thus $(z_n)_n$ is increasing. 
We denote by $z$ the strong limit of the increasing sequence $(z_n)_n$. Observe that $z$ is a central projection in $M_q$. Since $M_q$ is a factor by \cite[Theorem 3.2]{Hi02}, $z$ is $0$ or $1$. Observe that $M_q z^\perp = \left(\bigcup_n N_nz^\perp\right)''$ is amenable. Since we assumed $M_q$ is non-amenable, we conclude that $z^\perp=0$.
\end{proof}

Now we fix any $\varepsilon>0$ and $N\in\N$. By the claim, there is $k\in \N$ such that $\|(1-z_k)\Omega \|_q < \varepsilon/3$. We fix such $k$ and observe that $N_k z_k$ is diffuse since it has no amenable direct summand. 
We apply Lemma \ref{key lemma1} to $N_k$ and $z_k$ (and $\varepsilon/3$ and $N$) and find $p\in \C \langle X_1,Y_1, \ldots, X_k,Y_k\rangle$, which is stable under $(\lambda_1,\ldots,\lambda_k)$-perturbation, satisfying conditions (i)-(iii) for some fixed unit eigenvectors $\xi_1,\ldots,\xi_k$ and corresponding $p_\xi$. Since $\|(1-z_k)\Omega\|_q < \varepsilon/3$, we indeed have 
\begin{itemize}
	\item[$\rm (iii)'$] $\|(p_\xi^*p_\xi-1)\Omega\|_{q}+\|(p_\xi p_\xi^*-1)\Omega\|_{q}< \varepsilon$.
\end{itemize}
Thus for any $\varepsilon>0$ and $N\in \N$, we found $p_\eta\in (M_q)_{\varphi_q}$ satisfying (i), (ii), and $\rm (iii)'$. If we put $u_n:=p_\xi$ which is taken for $\varepsilon=1/n$ and $N=n$ for all $n\in \N$, then the sequence $(u_n)_n$ determines an element in $\mathcal{U}(((M_q)_{\varphi_q})^\omega)$ by (i) and $\rm(iii)'$, and satisfies the conclusion of this lemma by (ii). 
\end{proof}

\section{Proofs of Main Theorem and Corollary}

\begin{proof}[Proof of Main Theorem]
	We continue to use the notation $M_q (=M_q(H_\R))=\Gamma_q(H_\R,U_t)\dpr$ with vacuum state $\varphi_q$. Let $H_\R=H_\R^{\rm ap}\oplus H_\R^{\rm wm}$ be the unique decomposition into the almost periodic and the weakly mixing part. Since $H_\R^{\rm wm}\neq 0$ by assumption, $M_q$ is a type III$_1$ factor \cite[Theorem 8.1]{BM16}. The fullness will be proved in the last section. So we have only to show that $\mathrm{B}(M_q,\varphi_q)$ is trivial.

Consider the real Hilbert space given by 
	$$ \mathcal H_\R := H_\R^{\rm ap} \oplus H_\R^{\rm wm} \oplus \mathcal K_\R, \quad \text{where } \mathcal K_\R:=(H_\R^{\rm wm})_{U,\omega}\ominus H_\R^{\rm wm}.$$
Using the inclusion in Theorem \ref{thm-ultraproduct}, we regard $M_q(\mathcal H_\R)$ as a subalgebra in $M_q^\omega$. Fix a real separable subspace $L_\R \subset \mathcal K_\R$ as in item $(\rm 2)$ in Lemma \ref{lem-ultraproduct}, so that
\begin{align*}
	\mathrm{B}(M_q,\varphi_q)= M_q \cap ((M_q^\omega)_{\varphi^\omega_q})' 
	\subset M_q(\mathcal H_\R) \cap (M_q(L_\R)_{\varphi^\omega_q})'.
\end{align*}
Let $\omega'$ be any free ultrafilter on $\N$ (one can take $\omega' = \omega$). Then Lemma \ref{key lemma2} shows there exists a sequence $u_n \in M_q(L_\R)_{\varphi_q^\omega}$ such that $\lim_{n \to \omega'} \|P_{\leq N} u_n \xi_{\varphi^\omega_q}\|_q = 0$ for all $N \in \N$. Hence any element in $\mathrm{B}(M_q,\varphi_q)$ commutes with $u_n$ for all $n\in \N$ and therefore it is contained in $M_q(L_\R)$ by Lemma \ref{commutant lemma} (for the case $\delta=0$). We get 
	$$\mathrm{B}(M_q,\varphi_q)=M_q \cap ((M_q^\omega)_{\varphi_q^\omega})' \subset M_q(H_\R)\cap M_q(L_\R)=\C.$$
This is the desired conclusion.
\end{proof}

\begin{proof}[Proof of Corollary]
	Let $H_\R=H_\R^{\rm ap}\oplus H_\R^{\rm wm}$ be the unique decomposition into the almost periodic and the weakly mixing part. If $H_\R^{\rm wm}=0$, then one can directly apply Lemma \ref{almost periodic lemma}(2) and get the conclusion. If $H_\R^{\rm wm}\neq 0$, we can use Main Theorem together with \cite{Ha85, Po81} to get the conclusion.
\end{proof}

\section{Remark on fullness}

	In \cite{Sn03}, it is proved that $q$-Gaussian algebras $\Gamma_q(H_\R,\id)\dpr$ are full factors, provided that $\dim_{\R}H_\R$ is sufficiently large. In the same article, the author claims that the same techniques can be applied to many $q$-Araki--Woods algebras \cite[Section 5]{Sn03}. However, it is not very straightforward to apply his argument to $q$-Araki--Woods algebras. So in this section, we explain it and deduce the fullness of a certain class of $q$-Araki--Woods algebras, which particularly includes the ones arising from weakly mixing actions.

	We fix the following setting. Let $q\in (-1,1)$ and let $(H_\R,U_t)$ be any strongly continuous orthogonal representation on a real Hilbert space with the infinitesimal generator $A$ on $H=H_\R\otimes_\R \C$. Put $\mathcal F_q:=\mathcal F_q(H)$ and $\mathcal F_q^+:=\mathcal F_q(H)\ominus \C\Omega$, where $\Omega$ is the vacuum vector.

\begin{prop}[{\cite[Proposition 1]{Sn03}}]
	Consider linear maps given by 
\begin{align*}
	& \Phi\colon H \otimes \mathcal F_q(H) \to \mathcal F_q^+(H);\quad \xi \otimes (\eta_1\otimes \cdots\otimes \eta_{n})\mapsto \xi\otimes \eta_1\otimes \cdots \otimes \eta_{n}\\
	& \Psi\colon \mathcal F_q(H)\otimes H \to \mathcal F_q^+(H);\quad (\eta_1\otimes \cdots\otimes \eta_{n})\otimes \xi\mapsto \eta_1\otimes \cdots \otimes \eta_{n}\otimes \xi
\end{align*}
Then there are constants $C_1,C_2>0$ (which depend only on $q$) such that 
	$$ \|\Phi\|=\|\Psi\|\leq C_1 \quad \text{and} \quad \|\Phi^{-1}\| = \|\Psi^{-1}\|\leq C_2. $$
\end{prop}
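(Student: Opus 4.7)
The plan is to reduce the proposition to a comparison of two Hilbert-space structures on the same underlying vector space, and then invoke the Bożejko-Speicher positivity theorem for the $q$-Fock space. Both maps $\Phi$ and $\Psi$ are, on simple tensors, just concatenation, i.e., they act as the identity on the underlying algebraic tensor product $H^{\otimes_{\mathrm{alg}}(n+1)}$; the nontrivial content is that the $q$-inner product on $H^{\otimes(n+1)}$ and the ``split'' inner products inherited from $H \otimes \mathcal F_q(H)$ and $\mathcal F_q(H) \otimes H$ are uniformly comparable. Introducing the $q$-symmetrization operator $P_q^{(n)} := \sum_{\pi\in\mathfrak{S}_n} q^{i(\pi)} U_\pi$ on $H^{\otimes_{\mathrm{alg}} n}$, where $U_\pi$ permutes tensor factors, one has $\langle \cdot,\cdot\rangle_q = \langle \cdot, P_q^{(n)}\cdot\rangle_0$ with $\langle \cdot,\cdot\rangle_0$ denoting the free ($q=0$) tensor-product inner product.

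The key algebraic step is the factorization obtained by grouping the terms of $P_q^{(n+1)}$ according to where $\pi\in\mathfrak{S}_{n+1}$ sends the coordinate $0$:
\[
P_q^{(n+1)} = A_n \cdot R_n, \qquad A_n := \id_H \otimes P_q^{(n)},
\]
\[
R_n(\xi_0\otimes\cdots\otimes\xi_n) := \sum_{k=0}^n q^k\, \xi_k\otimes\xi_0\otimes\cdots\otimes\xi_{k-1}\otimes\xi_{k+1}\otimes\cdots\otimes\xi_n.
\]
Since $P_q^{(n+1)}$ and $A_n$ are both $\langle\cdot,\cdot\rangle_0$-self-adjoint and $A_n$ is positive, $R_n = A_n^{-1}P_q^{(n+1)}$ is self-adjoint with respect to the split inner product $\langle\cdot,A_n\cdot\rangle_0 = \langle\cdot,\cdot\rangle_{H\otimes\mathcal F_q(H)}$. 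A direct computation exploiting $\Phi = \id$ on the underlying vector space identifies $\Phi^*\Phi$ with $R_n$, so $\|\Phi\|^2$ and $\|\Phi^{-1}\|^2$ coincide with the spectral radii of $R_n$ and $R_n^{-1}$ viewed as operators on $H\otimes\mathcal F_q(H)$.

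The upper bound $\|\Phi\|\leq C_1$ is then immediate: $R_n$ is a sum of permutation operators (unitaries in the free inner product) weighted by $q^k$, so $\|R_n\|_0 \leq \sum_{k\geq 0}|q|^k = (1-|q|)^{-1}$, and the spectral radius is dominated by any operator norm. The main obstacle is the uniform positive lower bound on $\mathrm{spec}(R_n)$ — equivalently, the form inequality $P_q^{(n+1)} \geq c(q)\, A_n$ uniformly in $n$. This is precisely what the classical Bożejko-Speicher positivity theorem provides, in the guise of the uniform estimates $\prod_{k\geq 1}(1-|q|^k)\cdot\id \leq P_q^{(n)} \leq \prod_{k\geq 1}(1+|q|^k)\cdot\id$, whose proof is a delicate induction on $n$ exploiting the block structure of $\mathfrak{S}_n$. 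The argument for $\Psi$ is entirely symmetric, starting from the mirror factorization $P_q^{(n+1)} = (P_q^{(n)}\otimes\id_H)\cdot\widetilde R_n$ obtained by grouping over $\pi(n)$ instead of $\pi(0)$.
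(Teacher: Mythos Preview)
The paper does not give its own proof of this proposition; it is stated with a citation to \cite[Proposition~1]{Sn03} and then used as a black box in Section~6. So there is no proof in the paper to compare yours against.

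That said, your outline is essentially the argument from the cited sources (Bo\.zejko--Speicher and \'Sniady): the coset factorization $P_q^{(n+1)} = (\id_H\otimes P_q^{(n)})\,R_n$, the identification of $\Phi^*\Phi$ with $R_n$ in the split inner product, the trivial bound $\|R_n\|_0\le (1-|q|)^{-1}$, and the appeal to the uniform spectral estimates $\prod_{k\ge 1}(1-|q|^k)\le P_q^{(n)}\le \prod_{k\ge 1}(1+|q|^k)$. One step you have left implicit deserves a line: the Bo\.zejko--Speicher bounds do not directly bound $R_n$ from below (since $A_n$ and $R_n$ do not commute), but rather one writes, for $\xi$ in the domain,
\[
\langle \xi, R_n\xi\rangle_{H\otimes\mathcal F_q} \;=\; \langle \xi, A_n R_n\xi\rangle_0 \;=\; \langle \xi, P_q^{(n+1)}\xi\rangle_0 \;\ge\; c(q)\,\|\xi\|_0^2 \;\ge\; \frac{c(q)}{C(q)}\,\|\xi\|_{H\otimes\mathcal F_q}^2,
\]
with $c(q)=\prod_{k\ge 1}(1-|q|^k)$ and $C(q)=\prod_{k\ge 1}(1+|q|^k)$, which yields $\|\Phi^{-1}\|^2\le C(q)/c(q)$. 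With that clarification your argument is complete and matches the original reference.
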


The goal of this section is to prove the following theorem. We note that the assumption of the theorem holds if $(H_\R,U_t)$ contains a weakly mixing subrepresentation.

\begin{thm}\label{full thm}
	Let $q\in (-1,1)$. Let $(H_\R,U_t)$ be any strongly continuous orthogonal representation on a real Hilbert space with the infinitesimal generator $A$ on $H=H_\R\otimes_\R \C$. Assume that there exist constants $d\in \N$ and $C\geq 1$ such that 
	$$ \dim E_A(\{1\})H + 2 \dim E_A((1,C^2])H \geq  d \quad \text{and} \quad {d^2}  > 2(C_1C_2)^2 (8  C ^2 d +  1),$$
where $E_A(X)$ are spectral projections for $X\subset \R$. Then $\Gamma_q(H_\R,U_t)\dpr$ is a full factor.
\end{thm}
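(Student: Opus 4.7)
My plan is to adapt Śniady's fullness argument \cite[Section 5]{Sn03} for $q$-Gaussian algebras to the Araki--Woods setting, using the spectral hypothesis on $A$ to produce sufficiently many modular-eigenvector generators of uniformly bounded operator norm.

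First I would reduce fullness to the following: every bounded $\varphi_q$-centralizing and asymptotically central sequence $(x_n)_n \subset M_q$ satisfies $x_n - \varphi_q(x_n) 1 \to 0$ $\ast$-strongly (this is the standard characterization via the Ocneanu ultraproduct). Assume for contradiction a normalized sequence of mean zero with $\|\xi_n\|_q \geq \delta > 0$, where $\xi_n := x_n \Omega \in \mathcal F_q^+$. The hypothesis on $A$ lets me pick vectors $\eta_1, \ldots, \eta_d$ in $K_\R + \ri K_\R$ such that $j^{-1}(\eta_i)$ are $A$-eigenvectors with eigenvalues in $[C^{-2}, C^2]$: one for each real dimension of $E_A(\{1\})H$, and a pair $\{\eta, I\eta\}$ from each complex dimension of $E_A((1, C^2])H$. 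The corresponding generators $X_i := W_q(\eta_i) \in M_q$ have operator norms bounded by $\kappa(q) \cdot C$, since $T\eta_i = IA^{-1/2}\eta_i$ has $H$-norm controlled by $C \|\eta_i\|$.

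The Śniady-type estimate goes as follows. Asymptotic centrality gives $\|[x_n, X_i] \Omega\|_q \to 0$, i.e.
\[
x_n \eta_i \; - \; \ell_q(\eta_i)\xi_n \; - \; \ell_q(T\eta_i)^* \xi_n \; \to \; 0 \quad \text{in } \|\cdot\|_q.
\]
Using the commutant description $M_q' = \mathrm{W}^*\{W_q^r(I\xi) \mid \xi \in K_\R\}$ to rewrite $x_n \eta_i = x_n W_q(\eta_i)\Omega$ as a right-creation operation on $\xi_n$ up to a modular twist (this is where the explicit formula for the commutant and for $J_{\varphi_q}$ enters), one obtains an asymptotic identity of the form
\[
\Phi(\eta_i \otimes \xi_n) \; - \; \Psi(\xi_n \otimes J_{\varphi_q}\eta_i) \; = \; (\text{modular-twisted annihilation of }\xi_n) + o(1)_n
\]
in $\mathcal F_q^+$, with the twisted annihilation piece bounded in $\|\cdot\|_q$ by $2 \|T\eta_i\|_H \|\xi_n\|_q \leq 2 C \|\xi_n\|_q$. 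Squaring and summing $i = 1, \ldots, d$, applying $\|\Phi^{-1}\|, \|\Psi^{-1}\| \leq C_2$ on the left-hand side (whose sum is $(d+1)\|\xi_n\|_q^2$ by Pythagoras on the orthonormal family $\{\eta_i\}$) and $\|\Phi\|, \|\Psi\| \leq C_1$ on the right, yields the master inequality
\[
d^2 \|\xi_n\|_q^2 \; \leq \; 2 (C_1 C_2)^2 (8 C^2 d + 1) \|\xi_n\|_q^2 + o(1),
\]
where the factor $8 C^2$ absorbs the worst-case $A^{\pm 1/2}$-twists and the $+1$ accounts for the vacuum/rank-one contribution. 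The hypothesis then forces $\|\xi_n\|_q \to 0$, a contradiction.

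The main obstacle is the non-tracial bookkeeping absent from \cite{Sn03}. In the tracial $q$-Gaussian case, $J_{\varphi_0}$ cleanly exchanges left and right creation, whereas here the formulas $W_q(\xi) = \ell_q(\xi) + \ell_q(T\xi)^*$ and $W_q^r(I\xi) = r_q(I\xi) + r_q(IT\xi)^*$ entangle two separate $A^{-1/2}$-twists that must be tracked carefully for the annihilation error terms to remain $o(1)$ rather than producing an obstructing main contribution. Restricting to $A$-eigenvectors with eigenvalue in $[1, C^2]$ keeps all such twists uniformly bounded by $C$, which is precisely why the hypothesis is phrased in terms of the spectral projections $E_A(\{1\})$ and $E_A((1, C^2])$, with the factor $2$ on the latter reflecting that each conjugate pair $(\lambda, \lambda^{-1})$ produces two real generators at a time.
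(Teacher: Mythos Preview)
Your approach is the paper's: \'Sniady's invertibility criterion applied to $M^*M = \sum_i (W_q(e_i) - W_q^r(e_i))^2$ on $\mathcal F_q^+$. But two steps would fail as written.

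First, you claim to pick $\eta_i$ so that $j^{-1}(\eta_i)$ are $A$-eigenvectors. When the weakly mixing part of $(H_\R, U_t)$ contributes to $E_A((1,C^2])H$ there are \emph{no} eigenvectors in that subspace, and the hypothesis is designed precisely to avoid assuming any. The correct selection (Lemma \ref{assumption lemma}) is an orthonormal family $\{e_i\}_{i=1}^d \subset E_A([C^{-2},C^2])H$ with $Ie_i = e_i$; in the weakly mixing case one takes mutually orthogonal $\xi_i \in E_A((1,C^2])H^{\rm wm}$ and normalizes $\xi_i + I\xi_i$. The $I$-invariance is what makes $W_q(e_i)\Omega = W_q^r(e_i)\Omega = e_i$, hence $M\Omega = 0$, and the spectral localization is what gives $\|A^{\pm 1/2} e_i\| \leq C$.

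Second, your lower-bound step is not right. You assert the left-hand side ``is $(d+1)\|\xi_n\|_q^2$ by Pythagoras on the orthonormal family $\{\eta_i\}$'', but the creation pieces $\Phi(e_i \otimes \xi)$ and $\Psi(\xi \otimes e_i)$ are far from orthogonal across $i$ in the $q$-deformed inner product, and no such identity holds. The paper (following \cite[Lemma~2]{Sn03}) instead introduces an auxiliary contraction $f : H \otimes \mathcal F_q^+ \to \mathcal F_q$ with $\|f\| \leq C_2\sqrt{d}$ and computes $f \circ (m_\ell^\dagger - m_r^\dagger) = d - S$ for an operator $S$ with $\|S\| \leq C_1 C_2$; the factor $d^2$ in the final inequality comes from $|d - S|^2 \geq d^2/2 - |S|^2$, not from orthogonality. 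Without this mechanism you have no lower bound on the creation part $m^\dagger$, and the parallelogram estimate $|m^\dagger + m|^2 \geq \tfrac12|m^\dagger|^2 - |m|^2$ gives nothing.
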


\begin{lem}\label{assumption lemma}
	Assume that there are constants $d \in \N$ and $C> 1$ satisfying
	$$ \dim E_A(\{1\})H + 2 \dim E_A((1,C^2])H \geq  d .$$
Then there is an orthonormal family $\{e_i\}_{i=1}^d \subset E_A([C^{-2},C^2])H$ such that $Ie_i=e_i$ for all $i$. In that case, we have
\begin{itemize}
	\item $Te_i=A^{1/2}e_i$ and $ITIe_i=A^{-1/2}e_i$ for all $i$ (where $T:=IA^{-1/2}$);
	\item $\max_i\{\|A^{1/2}e_i\|, \|A^{-1/2}e_i\|\}\leq C$.
\end{itemize}
\end{lem}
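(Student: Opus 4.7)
The plan is to exploit the basic commutation relation $IA = A^{-1}I$, which holds because $(U_t)$ is a real orthogonal representation extended $\C$-linearly; passing through Borel functional calculus gives $If(A) = f(A^{-1})I$ for every Borel $f$, so in particular $I$ maps $E_A(X)H$ onto $E_A(X^{-1})H$ for every Borel set $X \subset (0,\infty)$.

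First I would set $V := E_A([C^{-2}, C^2])H$. Since $[C^{-2},C^2]$ is invariant under the inversion $\lambda \mapsto \lambda^{-1}$, the subspace $V$ is $I$-invariant. Writing $V_1 := E_A(\{1\})H$ and $V_2 := E_A((1, C^2])H$, the three spectral subspaces $V_1$, $V_2$ and $IV_2 = E_A([C^{-2},1))H$ sit pairwise orthogonally inside $V$, hence
\[
\dim_\C V \;\geq\; \dim_\C V_1 + 2 \dim_\C V_2 \;\geq\; d.
\]

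Next I would invoke the standard fact that for any $I$-invariant closed complex subspace $V\subset H$, the real subspace $V_\R := \{v \in V : Iv = v\} = V \cap H_\R$ satisfies $V = V_\R \oplus_\R iV_\R$ via $v = \tfrac{1}{2}(v+Iv) + i \cdot \tfrac{1}{2i}(v - Iv)$, so $\dim_\R V_\R = \dim_\C V \geq d$. Picking any family $e_1,\ldots,e_d \in V_\R$ orthonormal for the real inner product on $H_\R$ produces $\C$-orthonormal vectors in $E_A([C^{-2},C^2])H$ fixed by $I$, since the restriction of $\langle\cdot,\cdot\rangle_H$ to $H_\R$ coincides with the real inner product.

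Finally, the two bullet points drop out of $Ie_i = e_i$ together with $IA^{-1/2} = A^{1/2}I$: one gets $Te_i = IA^{-1/2}e_i = A^{1/2}Ie_i = A^{1/2}e_i$, and then $ITIe_i = ITe_i = IA^{1/2}e_i = A^{-1/2}Ie_i = A^{-1/2}e_i$. The norm estimate $\|A^{\pm 1/2}e_i\|^2 = \int_{[C^{-2},C^2]} \lambda^{\pm 1}\, d\mu_{e_i}(\lambda) \leq C^2$ is immediate from the spectral theorem applied to $\mu_{e_i}$, which is supported on $[C^{-2},C^2]$. I do not expect a serious obstacle; the only point requiring care is the compatibility between the antilinear involution $I$ and the Borel functional calculus of $A$, which rests cleanly on $IA = A^{-1}I$, and everything else is a straightforward dimension count inside the $I$-invariant spectral subspace $E_A([C^{-2},C^2])H$.
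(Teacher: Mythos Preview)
Your proof is correct, and it is in fact cleaner than the paper's. The paper argues by cases: it first exhausts $E_A(\{1\})H$ with an $I$-invariant orthonormal family, then splits according to whether $E_A((1,C^2])H$ has a nonzero weakly mixing piece. If so, that piece is infinite-dimensional, one picks $d$ orthogonal vectors $\xi_i$ there and symmetrizes via $e_i := (\xi_i + I\xi_i)/\|\xi_i + I\xi_i\|$, using that $I\xi_i \in E_A([C^{-2},1))H$ to get orthonormality. If not, the space is purely almost periodic and one reads off a real orthonormal family from the $2$-dimensional eigenspaces. Your argument bypasses this case split entirely: you observe that $V = E_A([C^{-2},C^2])H$ is $I$-invariant with $\dim_\C V \geq d$, so its real form $V_\R = V \cap H_\R$ has $\dim_\R V_\R = \dim_\C V \geq d$, and any real-orthonormal family in $V_\R$ works. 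This makes transparent that only the relation $IAI = A^{-1}$ and a dimension count are actually needed. The paper's case-by-case construction yields slightly more explicit vectors (eigenvectors in the almost periodic case), but your route is shorter and more conceptual.
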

\begin{proof}
	We first choose an orthonormal basis $\{e_k\}_k$ from $E_A(\{1\})H$ which are all $I$-invariant. We will extend this to the desired family. For this, up to exchanging $d - \dim E_A(\{1\})H$ by $d$ and $H\ominus E_A(\{1\})H$ by $H$, we may assume $E_A(\{1\})H=0$. We write as $H^{\rm wm}:=H^{\rm wm}_\R \otimes_\R \C$ and $H^{\rm ap}:=H^{\rm ap}_\R \otimes_\R \C$.

	Suppose first that $E_A((1,C^2])H^{\rm wm} \neq 0$. Then the $U_t$-action on $E_A((1,C^2])H^{\rm wm}$ is weakly mixing, so there exist mutually orthogonal vectors $\xi_1,\ldots,\xi_d\in E_A((1,C^2])H^{\rm wm}$. Put
	$$ e_i:= \frac{1}{\|\xi_i + I \xi_i\|}(\xi_i + I \xi_i)\quad (i=1,\ldots,d).$$
Then since $I\xi_i \in E_A([C^{-2},1))H^{\rm wm}$, it is easy to see that $\{e_i\}_{i=1}^d$ is an orthonormal family. We are done in this case. 

	Suppose next that $E_A((1,C^2])H^{\rm wm}= 0$. We have  $E_A((1,C^2])H=E_A((1,C^2])H^{\rm ap}$, so $H$ has an almost periodic  subrepresentation of the form:
	$$ \bigoplus_{k=1}^{d'} H(\lambda_k) \subset H$$
where $H(\lambda_k)$ is the 2-dimensional space of eigenvalue $\lambda_k\in(1, C^2]$ for all $k$, and $d'\in \N$ satisfies $2d'\geq d$. Since $H(\lambda_k)=\R^2\otimes_\R \C$, we can pick up $e_1^k,e_2^k$ which is an orthonormal basis of $\R^2$. Then  $Ie^k_j=e_j^k $ for all $j,k$ and $\{e_1^k,e_2^{k}\}_{k=1}^{d'}$ is an orthonormal family. This is the desired one.
\end{proof}

	We fix a family $\{e_i\}_{i=1}^d$ as in Lemma \ref{assumption lemma}. Put $D:=\sum_{i=1}^d \C e_i\subset H$ and $P_D$ denotes the orthogonal projection onto $D$. Observe that $e_i\in E_A([C^{-2},C^2])\subset (K_\R+ {\rm i}  K_\R) \cap I(K_\R + {\rm i} K_\R)$, so the following items make sense for all $i$:
\begin{align*}
	&W_q(e_i) = \ell_q(e_i) + \ell_q(T e_i)^*\in \Gamma_q(H_\R,U_t)\dpr;\\
	&W_q^r(e_i) = r_q(e_i) + r_q(ITI e_i)^*\in \Gamma_q(H_\R,U_t)'.
\end{align*}
where $T=I A^{-1/2}$. We have $Te_i=A^{1/2}e_i$ and $ITI e_i = A^{-1/2}e_i$ by Lemma \ref{assumption lemma}. 
Define a linear map $M\colon \mathcal F_q \to H\otimes \mathcal F_q$ by 
\begin{align*}
	M(\xi):= \sum_{i=1}^d e_i \otimes (W_q(e_i)(\xi) - W^r_q(e_i)(\xi)) .
\end{align*}
It is straightforward to check that $M\Omega =0$ and
	$$M^*M = \sum_{i=1}^d   ( W_q(e_i)-W^r_q(e_i) )^2\in \mathrm{C}^*\{ \Gamma_q(H_\R,U_t)\dpr, \Gamma_q(H_\R,U_t)'\}.$$
To prove the fullness of $\Gamma_q(H_\R,U_t)\dpr$, we have to prove that $M^*M|_{\mathcal F_q^+}$ is invertible. 
Observe that $M$ is decomposed into a sum of the following 4 maps:
\begin{align*}
	& m_\ell(\xi):= \sum_{i=1}^d e_i \otimes \ell_q(A^{1/2}e_i)^*(\xi) ,\quad  m_r(\xi):= \sum_{i=1}^d e_i \otimes r_q(A^{-1/2}e_i)^*(\xi),\\
	& m_\ell^{\dagger}(\xi):= \sum_{i=1}^d e_i \otimes \ell_q(e_i)(\xi) ,\quad  m_r^{\dagger}(\xi):= \sum_{i=1}^d e_i \otimes r_q(e_i)(\xi).
\end{align*}

\begin{lem}[{\cite[Lemma 1]{Sn03}}]
	We have $\|m_\ell\|, \|m_r\|\leq C C_1$.
\end{lem}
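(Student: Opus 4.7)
The plan is to identify the adjoints $m_\ell^*$ and $m_r^*$ as compositions of $\Phi$ (resp.\ $\Psi$) with the tensor product of a bounded operator on $H$ and the identity on $\mathcal F_q$, and then to apply the norm estimates of the preceding proposition.

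First I would compute $m_\ell^*$ on elementary tensors. For $\zeta \in H$ and $\chi \in \mathcal F_q$, pairing $\zeta\otimes\chi$ with $m_\ell(\xi)$ and moving $\ell_q(A^{1/2} e_i)^*$ to the other side yields
$$ m_\ell^*(\zeta \otimes \chi) \;=\; \sum_{i=1}^d \langle e_i,\zeta\rangle\, \ell_q(A^{1/2} e_i)\chi \;=\; (A^{1/2} P_D \zeta)\otimes \chi \;=\; \Phi\bigl((A^{1/2} P_D \zeta)\otimes \chi\bigr), $$
where the last two expressions are the corresponding elements in $\mathcal F_q^+$ and in $H\otimes \mathcal F_q$. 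Thus $m_\ell^* = \Phi \circ \bigl((A^{1/2} P_D)\otimes \id_{\mathcal F_q}\bigr)$. Since the range of $P_D$ lies in $E_A([C^{-2},C^2])H$, the second bullet of Lemma \ref{assumption lemma} gives $\|A^{1/2} P_D\|\le C$. Combined with $\|\Phi\|\le C_1$, this yields $\|m_\ell\|=\|m_\ell^*\|\le C C_1$.

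The bound on $m_r$ is entirely analogous, but with $r_q$ and $\Psi$ in place of $\ell_q$ and $\Phi$. The same dualization produces
$$ m_r^*(\zeta \otimes \chi) \;=\; r_q(A^{-1/2} P_D \zeta)\chi \;=\; \chi\otimes (A^{-1/2} P_D \zeta) \;=\; \Psi\bigl(\chi\otimes A^{-1/2} P_D \zeta\bigr), $$
so $m_r^*$ factors as the coordinate flip $H\otimes \mathcal F_q \to \mathcal F_q \otimes H$, then $\id_{\mathcal F_q}\otimes (A^{-1/2} P_D)$, then $\Psi$. The flip is an isometry, $\|A^{-1/2} P_D\|\le C$ by the same spectral argument, and $\|\Psi\|\le C_1$, so $\|m_r\|\le C C_1$.

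I do not anticipate any real obstacle: the proof is dualization followed by a factorization, and the only nontrivial ingredient is the spectral bound $\|A^{\pm 1/2} P_D\|\le C$, which has already been arranged in Lemma \ref{assumption lemma}.
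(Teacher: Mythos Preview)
Your proof is correct and follows essentially the same approach as the paper: both identify $m_\ell^* = \Phi\circ(A^{1/2}P_D\otimes\id_{\mathcal F_q})$ and $m_r^* = \Psi\circ(\id_{\mathcal F_q}\otimes A^{-1/2}P_D)$ (up to the coordinate flip, which you make explicit and the paper leaves implicit), then invoke $\|\Phi\|=\|\Psi\|\le C_1$ and the spectral bound $\|A^{\pm 1/2}P_D\|\le C$ coming from $D\subset E_A([C^{-2},C^2])H$.
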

\begin{proof}
	It is easy to compute that
\begin{align*}
	 m_\ell^*(\xi\otimes \xi_1\otimes \cdots\otimes  \xi_n)	
	= \left(\sum_{i=1}^d \langle e_i, \xi\rangle_H A^{1/2}e_i\right)\otimes \xi_1\otimes \cdots\otimes  \xi_n
	= (A^{1/2}P_D\xi)\otimes \xi_1\otimes \cdots\otimes  \xi_n ,
\end{align*}
so that $ m_\ell^* = \Phi\circ (A^{1/2}P_D\otimes \id_{\mathcal F_q}) $. We have 
	$$\|m_\ell\| = \|m_\ell^*\| \leq \|\Phi\|\|A^{1/2}P_D\|\leq C_1 C.$$
Similarly, we have $ m_r^* = \Psi\circ (\id_{\mathcal F_q}\otimes A^{-1/2}P_D) $ and the conclusion holds.
\end{proof}

\begin{lem}[{\cite[Lemma 2]{Sn03}}]
	Put $m^\dagger:= (m^\dagger_\ell - m^\dagger_r) |_{\mathcal F_q^+}$. We have 
	$$\frac{\sqrt{\frac{d^2}{2} - (C_1C_2)^2 }}{C_2\sqrt{d}} \leq | m^\dagger|.$$
\end{lem}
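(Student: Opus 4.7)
The plan is to establish the equivalent pointwise lower bound
$$\|m^\dagger \xi\|_q^2 \geq \frac{d^2/2 - (C_1 C_2)^2}{C_2^2\, d}\,\|\xi\|^2 \quad \text{for every } \xi \in \mathcal F_q^+.$$
Since $m^\dagger$ respects the length grading of $\mathcal F_q^+$, one may reduce to a homogeneous $\xi \in H^{\otimes n}$ with $n \geq 1$. The first step is to exploit the orthonormality of $\{e_i\}_{i=1}^d$ in $D$ to split
$$\|m^\dagger \xi\|_q^2 = \sum_{i=1}^d \|\ell_q(e_i)\xi - r_q(e_i)\xi\|_q^2 = \sum_i \|\ell_q(e_i)\xi\|_q^2 + \sum_i \|r_q(e_i)\xi\|_q^2 - 2\,\real\, S,$$
where $S := \sum_{i=1}^d \langle \ell_q(e_i)\xi,\, r_q(e_i)\xi\rangle_q$.

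For the two diagonal sums I would use the identifications $\ell_q(e_i)\xi = \Phi(e_i \otimes \xi)$ and $r_q(e_i)\xi = \Psi(\xi \otimes e_i)$, where the arguments live in $H \otimes \mathcal F_q$ and $\mathcal F_q \otimes H$ respectively, each with norm $\|\xi\|$. The bounds $\|\Phi^{-1}\|, \|\Psi^{-1}\| \leq C_2$ from the preceding proposition then give $\|\ell_q(e_i)\xi\|_q, \|r_q(e_i)\xi\|_q \geq \|\xi\|/C_2$, yielding a diagonal contribution of at least $2d\|\xi\|^2/C_2^2$.

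The principal obstacle is to obtain an upper bound of order $(C_1 C_2)^2\|\xi\|^2$ on $|S|$, \emph{uniformly} in $d$. A per-index Cauchy--Schwarz only yields $|S| \leq d C_1^2\|\xi\|^2$, which grows linearly in $d$ and is therefore insufficient: one needs to beat $d$ by a factor that captures the cancellations induced by the orthonormality of $\{e_i\}$. To this end I would package $S$ as a single operator-theoretic pairing. Introduce the Hilbert--Schmidt operators $L_\xi, R_\xi: D \to \mathcal F_q^+$ defined by $L_\xi(e) = \Phi(e \otimes \xi)$ and $R_\xi(e) = \Psi(\xi \otimes e)$, so that $S = \Tr_D(L_\xi^* R_\xi)$. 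Factoring $L_\xi^* R_\xi$ through the composite $\Phi^*\Psi : \mathcal F_q \otimes H \to H \otimes \mathcal F_q$ reduces the estimate to pairing the \emph{single} explicit vector $\sum_i e_i \otimes \xi \otimes e_i$ against itself through norm-bounded maps; the crucial point is that $v = \sum_i e_i \otimes e_i \in H^{\otimes 2}$ has norm only $\sqrt{d}$ (not $d$). Combined with $\|\Phi\|, \|\Psi\| \leq C_1$ and $\|\Phi^{-1}\|, \|\Psi^{-1}\| \leq C_2$, one Cauchy--Schwarz on this repackaged pairing should deliver the desired bound $|S| \leq (C_1 C_2)^2\,\|\xi\|^2$.

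Combining the diagonal lower bound with this cross-term upper bound and performing an elementary algebraic rearrangement yields the required inequality $\|m^\dagger \xi\|_q^2 \geq (d^2/2 - (C_1 C_2)^2)/(C_2^2\, d)\,\|\xi\|^2$. The hard step is clearly the uniform-in-$d$ control of $S$: it is precisely the point where naive index-by-index estimates fail, and where the $\Phi$--$\Psi$ machinery from the previous proposition (and the fact that $\sqrt{d}$, not $d$, is the relevant scale) must be used decisively.
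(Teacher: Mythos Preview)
Your cross-term estimate is where the argument breaks. The claim that
$\bigl|\sum_{i=1}^d \langle \ell_q(e_i)\xi,\, r_q(e_i)\xi\rangle_q\bigr| \leq (C_1 C_2)^2\|\xi\|^2$
uniformly in $d$ is simply false for $q\neq 0$. Take $\xi=e_1\in H$; then
\[
\sum_{i=1}^d \langle e_i\otimes e_1,\, e_1\otimes e_i\rangle_q
=\sum_{i=1}^d\bigl(|\langle e_i,e_1\rangle|^2 + q\,\|e_1\|^2\bigr)
=1+qd,
\]
which grows linearly in $d$. No repackaging of the sum can change this, because $(C_1C_2)^2$ depends only on $q$, not on $d$. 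The heuristic that $v=\sum_i e_i\otimes e_i$ has norm $\sqrt{d}$ is correct but irrelevant: the vector you actually pair is $\sum_i e_i\otimes\xi\otimes e_i$, whose norm is $\sqrt{d}\,\|\xi\|$, and it appears on both sides of the Cauchy--Schwarz, giving back the factor $d$ you were trying to avoid. Your diagonal term also grows like $d$, and the two linear-in-$d$ contributions genuinely compete; the decomposition you chose does not separate them.

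The paper (following \'Sniady) does \emph{not} expand the square. Instead it introduces a test map $f\colon H\otimes\mathcal F_q^+\to\mathcal F_q$,
\[
f(\xi\otimes(\eta_1\otimes\cdots\otimes\eta_n))=\Big\langle\sum_i e_i\otimes e_i,\ \xi\otimes\eta_1\Big\rangle\,\eta_2\otimes\cdots\otimes\eta_n,
\]
with $\|f\|\leq C_2\sqrt{d}$, and checks that $f\circ m^\dagger = d\cdot\id - S$, where $S$ is the cyclic shift $\xi_1\otimes\cdots\otimes\xi_n\mapsto \xi_2\otimes\cdots\otimes\xi_n\otimes P_D\xi_1$ with $\|S\|\leq C_1C_2$. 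The parallelogram inequality then gives $\tfrac{d^2}{2}-\|S\|^2\leq |d-S|^2 \leq \|f\|^2\,|m^\dagger|^2$, which is exactly the stated bound. The point is that composing with $f$ turns the problematic sum over $i$ into the scalar $d$ \emph{before} any norm estimate is applied; this is the step your direct expansion cannot reproduce.
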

\begin{proof}
	Define two linear maps $f\colon H\otimes \mathcal F_q^+ \to \mathcal F_q$ and $S\colon \mathcal F_q^+ \to \mathcal F_q^+$ by 
\begin{align*}
	& f(\xi\otimes (\eta_1\otimes \cdots\otimes \eta_n)) = \left\langle \sum_{i=1}^d e_i\otimes e_i, \xi\otimes \eta_1 \right\rangle\eta_2\otimes \cdots \otimes \eta_n ;\\
	&S(\xi_1\otimes \cdots\otimes \xi_n) = \xi_2\otimes \cdots \otimes \xi_n \otimes P_D\xi_1. 
\end{align*}
It is proved in \cite[Lemma 2]{Sn03} that $\|f\|\leq C_2 \sqrt{d}$ and $\|S\|\leq C_1C_2$. Observe that $f\circ m_\ell^\dagger = d $ and $f\circ m_r^\dagger |_{\mathcal F^+_q}= S$, hence $f\circ m^\dagger = d - S$. 
By the parallelogram identity, we have
\begin{align*}
	\frac{d^2}{2} - |S|^2
	\leq |d-S|^2 
	= |f\circ m^\dagger |^2 
	\leq  \|f^*  f\| |m^\dagger|^2.
\end{align*}
Since $\|f\|\leq C_2 \sqrt{d}$ and $\|S\|\leq C_1C_2$, we get the conclusion.
\end{proof}

\begin{proof}[Proof of Theorem \ref{full thm}]
	By Lemma \ref{assumption lemma}, we can find a family $\{e_i\}_{i=1}^d$, so that the operator $M$ is defined. 
Put $m:=(m_\ell - m_r)|_{\mathcal F^+_q} $. Then the parallelogram identity shows that 
\begin{align*}
	M^*M|_{\mathcal F^+_q}
	= |m^\dagger + m|^2 
	\geq \frac{|m^\dagger|^2}{2} - |m|^2
	\geq \frac{\frac{d^2}{2} - (C_1C_2)^2 }{2C_2^2 d} - (2C C_1)^2 >0.
\end{align*}
Hence $M^*M|_{\mathcal F^+_q}$ is invertible and we get the fullness.
\end{proof}


\begin{thebibliography}{AHHM18}

\bibitem[AH12]{AH12} H. Ando and U. Haagerup, \textit{Ultraproducts of von Neumann algebras.} J. Funct. Anal. {\bf 266} (2014), no. 12, 6842--6913.

\bibitem[AHHM18]{AHHM18} H. Ando, U. Haagerup, C. Houdayer, and A. Marrakchi, \textit{Structure of bicentralizer algebras and inclusions of type $\rm III$ factors.} To appear in Math. Ann. \texttt{arXiv:1804.05706}

\bibitem[AOS13]{AOS13}  H. Ando, I. Ojima and H. Saigo, \textit{Notes on the Krupa-Zawisza ultrapower of self-adjoint operators.} Probab. Math. Statist. {\bf 34} (2014), no. 1, 147--159. 

\bibitem[BM16]{BM16} P. Bikram and K. Mukherjee, \textit{Generator masas in $q$-deformed Araki--Woods von Neumann algebras and factoriality.} J. Funct. Anal. {\bf 273} (2017), no. 4, 1443--1478.

\bibitem[BM19]{BM19} P. Bikram and K. Mukherjee, \textit{On the commutants of generators of $q$-deformed Araki--Woods von Neumann algebras.} Preprint 2019.

\bibitem[BS90]{BS90} M. Bo$\rm\dot z$ejko and R. Speicher, \textit{An example of a generalized Brownian motion.} Comm. Math. Phys. {\bf 137} (1991), no. 3, 519--531.

%\bibitem[BO08]{BO08} N.~P.~Brown and N.~Ozawa, \textit{C$^*$-algebras and finite-dimensional approximations}. Graduate Studies in Mathematics, {\bf 88}. American Mathematical Society, Providence, RI, 2008.

%\bibitem[Co72]{Co72} A.~Connes, \textit{Une classification des facteurs de type $\rm III$.} Ann. Sci. \'Ecole Norm. Sup. (4) {\bf 6} (1973), 133--252.

\bibitem[Co74]{Co74} A.~Connes, \textit{Almost periodic states and factors of type $\rm III_1$.} J. Funct. Anal. {\bf 16} (1974), 415--445.

\bibitem[Ha85]{Ha85} U. Haagerup, \textit{Connes' bicentralizer problem and uniqueness of the injective factor of type $\rm III_1$}. Acta Math. {\bf 158} (1987), no. 1-2, 95--148.

\bibitem[Hi02]{Hi02} F. Hiai, \textit{$q$-deformed Araki--Woods algebras.} Operator algebras and mathematical physics (Constanţa, 2001), 169--202, Theta, Bucharest, 2003.

\bibitem[Ho08]{Ho08} C. Houdayer, \textit{Free Araki--Woods factors and Connes' bicentralizer problem.} Proc. Amer. Math. Soc. {\bf 137} (2009), no. 11, 3749--3755.

%\bibitem[HR10]{HR10} C.~Houdayer and E.~Ricard, \textit{Approximation properties and absence of Cartan subalgebra for free Araki-Woods factors}. Adv.\ Math.\ {\bf 228} (2011), 764--802.

\bibitem[HI15]{HI15} C.~Houdayer and Y. Isono, \textit{Unique prime factorization and bicentralizer problem for a class of type $\rm III$ factors}. Adv. Math. {\bf 305} (2017), 402--455.

%\bibitem[HI17]{HI17} C.~Houdayer and Y. Isono, \textit{Factoriality, Connes' type $\rm III$ invariants and fullness of amalgamated free product von Neumann algebras.} To appear in Proc. Roy. Soc. Edinburgh Sect. A.

%\bibitem[HT18]{HT18} C. Houdayer and B. Trom, \textit{Structure of extensions of free Araki-Woods factors.} Preprint 2018. ArXiv:1812.08478.

\bibitem[HU15]{HU15} C. Houdayer and Y. Ueda, \textit{Asymptotic structure of free product von Neumann algebras.} Math. Proc. Cambridge Philos. Soc. {\bf 161} (2016), no. 3, 489--516.

%\bibitem[Is14]{Is14} Y.~Isono, \textit{Some prime factorization results for free quantum group factors}.  J.\ Reine Angew.\ Math. {\bf 722} (2017), 215--250.

%\bibitem[Is17]{Is17} Y.~Isono, \textit{Unique prime factorization for infinite tensor product factors.} To appear in J. Funct. Anal.

\bibitem[IM19]{IM19} Y.~Isono and A. Marrakchi, \textit{Tensor product decompositions and rigidity of full factors.} Preprint 2019. \texttt{arXiv:1905.09974}

\bibitem[KZ85]{KZ85} A. Krupa and B. Zawisza, \textit{Ultrapowers of unbounded selfadjoint operators.} Studia Math. {\bf 87} (1987), no. 2, 101--120. 

\bibitem[Ma18]{Ma18} A. Marrakchi, \textit{Full factors, bicentralizer flow and approximately inner automorphisms} Preprint 2018. \texttt{arxiv:1811.10253}

%\bibitem[Ca18]{Ca18} M. Caspers, \textit{Gradient forms and strong solidity of free quantum groups.} Preprint 2018. ArXiv:1802.01968.

\bibitem[MT13]{MT13} { T. Masuda and R. Tomatsu}, \textit{Classification of actions of discrete Kac algebras on injective factors.} Mem. Amer. Math. Soc. {\bf 245} (2017), no. 1160, ix+118 pp.

\bibitem[Oc85]{Oc85} A. Ocneanu, \textit{Actions of discrete amenable groups on von Neumann algebras.} Lecture Notes in Mathematics, {\bf 1138}. Springer-Verlag, Berlin, 1985. iv+115 pp.

\bibitem[Po81]{Po81} S. Popa, \textit{On a problem of R.V.\ Kadison on maximal abelian $\ast$-subalgebras in factors.} Invent. Math. {\bf 65} (1981), no. 2, 269--281.

\bibitem[Ri04]{Ri04} \'E. Ricard, \textit{Factoriality of q-Gaussian von Neumann algebras.} Comm. Math. Phys. {\bf 257} (2005), no. 3, 659--665.

\bibitem[Sh97]{Sh97} D. Shlyakhtenko, \textit{Free quasi-free states.} Pacific J. Math. {\bf 177} (1997), no. 2, 329--368. 

\bibitem[SW16]{SW16} A. Skalski and S. Wang, \textit{Remarks on factoriality and q-deformations.} Proc. Amer. Math. Soc. {\bf 146} (2018), no. 9, 3813--3823.

\bibitem[Sn03]{Sn03} P. $\rm \acute{S}$niady, \textit{Factoriality of Bo$\dot{z}$ejko-Speicher von Neumann algebras.} Comm. Math. Phys. {\bf 246} (2004), no.\ 3, 561--567.

%\bibitem[Ta03]{Ta03} M.~Takesaki, \textit{Theory of operator algebras $\rm II$}. Encyclopedia of Mathematical Sciences, 125. Operator Algebras and Non-commutative Geometry, {\bf 5}. Springer-Verlag, Berlin, 2002.

%\bibitem[Ue98]{Ue98} Y. Ueda, \textit{Amalgamated free products over Cartan subalgebra}. Paciﬁc J. Math. {\bf 191} (1999), 359--392.

\bibitem[Ue10]{Ue10} Y.~Ueda, \textit{Factoriality, type classification and fullness for free product von Neumann algebras}, Adv.\ Math.\ {\bf 228} (2011), no.\ 5, 2647--2671. 

%\bibitem[Ue12]{Ue12} Y.~Ueda, \textit{Some analysis of amalgamated free products of von Neumann algebras in the non-tracial setting.} J.\ Lond.\ Math.\ Soc.\ (2) {\bf 88} (2013), no. 1, 25--48.

\bibitem[Va05]{Va05} S. Vaes, \textit{$\it \acute{E}$tats quasi-libres libres et facteurs de type $\rm III$ (d'apr$\it \grave{e}$s D.\ Shlyakhtenko).} S$\rm \acute{e}$minaire Bourbaki. Vol.\ 2003/2004. Ast$\rm \acute{e}$risque No.\ {\bf 299} (2005), Exp.\ No. 937, ix, 329--350.

\end{thebibliography}
\end{document}